\newcommand{\numberseries}{\bfseries}   
\newlength{\thmtopspace}                
\newlength{\thmbotspace}                
\newlength{\thmheadspace}               
\newlength{\thmindent}                  
\newtheoremstyle{fixed bf head,slanted body}
                {\thmtopspace}{\thmbotspace}{\slshape}
                {\thmindent}{\bfseries}{.}{\thmheadspace}
                {{\numberseries \thmnumber{#2\;}}\thmname{#1}\thmnote{ (#3)}}
\newtheoremstyle{fixed bf head,upright body}
                {\thmtopspace}{\thmbotspace}{\upshape}
                {\thmindent}{\bfseries}{.}{\thmheadspace}
                {{\numberseries \thmnumber{#2\;}}\thmname{#1}\thmnote{ (#3)}}
\newtheoremstyle{numbered paragraph}
                {\thmtopspace}{\thmbotspace}{\upshape}
                {\thmindent}{\upshape}{}{\thmheadspace}
                {{\numberseries \thmnumber{#2.}}}
\theoremstyle{fixed bf head,slanted body}
\newtheorem{res}{}[section]
\newtheorem{thm}[res]{Theorem}          \newtheorem*{thm*}{Theorem}
\newtheorem{prp}[res]{Proposition}      \newtheorem*{prp*}{Proposition}
\newtheorem{cor}[res]{Corollary}        \newtheorem*{cor*}{Corollary}
\newtheorem{lem}[res]{Lemma}            \newtheorem*{lem*}{Lemma}
\theoremstyle{fixed bf head,upright body}
\newtheorem{dfn}[res]{Definition}       \newtheorem*{dfn*}{Definition}
\newtheorem{con}[res]{Construction}     \newtheorem*{con*}{Construction}
      \newtheorem*{obs*}{Observation}
\newtheorem{rmk}[res]{Remark}           \newtheorem*{rmk*}{Remark}
\newtheorem{exa}[res]{Example}          \newtheorem*{exa*}{Example}
\theoremstyle{numbered paragraph}
\newtheorem{ipg}[res]{}
\newlength{\thmlistleft}        
\newlength{\thmlistright}       
\newlength{\thmlistpartopsep}   
\newlength{\thmlisttopsep}      
\newlength{\thmlistparsep}      
\newlength{\thmlistitemsep}     
\newcounter{eqc} 
\newenvironment{eqc}{\begin{list}{\upshape (\textit{\roman{eqc}})}%
    {\usecounter{eqc}%
      \setlength{\leftmargin}{\thmlistleft}%
      \setlength{\labelwidth}{\thmlistleft}%
      \setlength{\rightmargin}{\thmlistright}%
      \setlength{\partopsep}{\thmlistpartopsep}%
      \setlength{\topsep}{\thmlisttopsep}%
      \setlength{\parsep}{\thmlistparsep}%
      \setlength{\itemsep}{\thmlistitemsep}}}%
  {\end{list}}%
\newcommand{\eqclbl}[1]{{\upshape(\textit{#1})}}
\newcounter{prt}
\newenvironment{prt}{\begin{list}{\upshape (\alph{prt})}%
    {\usecounter{prt}%
      \setlength{\leftmargin}{\thmlistleft}%
      \setlength{\labelwidth}{\thmlistleft}%
      \setlength{\rightmargin}{\thmlistright}%
      \setlength{\partopsep}{\thmlistpartopsep}%
      \setlength{\topsep}{\thmlisttopsep}%
      \setlength{\parsep}{\thmlistparsep}%
      \setlength{\itemsep}{\thmlistitemsep}}}%
  {\end{list}}%
\newcommand{\prtlbl}[1]{{\upshape(#1)}}
  \newcommand{\proofofimp}[3][:]{\mbox{\eqclbl{#2}$\!\implies\!$\eqclbl{#3}#1}}
  \newcommand{\proofoftag}[2][:]{(#2)#1}
\newcommand{\pgref}[1]{\ref{#1}}
\newcommand{\thmref}[2][Theorem~]{#1\pgref{thm:#2}}
\newcommand{\corref}[2][Corollary~]{#1\pgref{cor:#2}}
\newcommand{\prpref}[2][Proposition~]{#1\pgref{prp:#2}}
\newcommand{\lemref}[2][Lemma~]{#1\pgref{lem:#2}}
\newcommand{\conref}[2][Construction~]{#1\pgref{con:#2}}
\newcommand{\exaref}[2][Example~]{#1\pgref{exa:#2}}
\newcommand{\rmkref}[2][Remark~]{#1\pgref{rmk:#2}}
\newcommand{\secref}[2][Section~]{#1\ref{sec:#2}}
\renewcommand{\eqref}[1]{(\pgref{eq:#1})}
\newcommand{\thmcite}[2][?]{\cite[thm.~#1]{#2}}
\newcommand{\corcite}[2][?]{\cite[cor.~#1]{#2}}
\newcommand{\prpcite}[2][?]{\cite[prop.~#1]{#2}}
\newcommand{\lemcite}[2][?]{\cite[lem.~#1]{#2}}
\newcommand{\seccite}[2][?]{\cite[sec.~#1]{#2}}
\newcommand{\exacite}[2][?]{\cite[ex.~#1]{#2}}
\newcommand{\xycomma}[1][,]{\rlap{\;#1}}
\newcommand{\fg}{finitely generated}
\newcommand{\fp}{finitely presented}
\newcommand{\submod}[2][R]{#1\langle#2\rangle}
\newcommand{\Id}[1]{1^{#1}}
\newcommand{\dxra}[2][]{\xrightarrow[#1]{\mspace{2mu}#2\mspace{2mu}}}
\newcommand{\e}{\varepsilon}
\renewcommand{\k}{\kappa}
\renewcommand{\l}{\lambda}
\newcommand{\Ct}[2][u]{\operatorname{D}^{#1}(#2)}
\def\urltilda{\kern -.15em\lower .7ex\hbox{\~{}}\kern .04em} 
\newcommand{\setof}[3][\mspace{1mu}]{\{#1#2 \mid #3#1\}}
\newcommand{\NN}{\mathbb{N}}
\newcommand{\ZZ}{\mathbb{Z}}
\newcommand{\dinZ}{{\d\in\ZZ}}
\newcommand{\qtext}[1]{\quad\text{#1}\quad}
\newcommand{\qqtext}[1]{\qquad\text{#1}\qquad}
\newcommand{\qand}{\qtext{and}}
\newcommand{\qqand}{\qqtext{and}}
\newcommand{\deq}{\:=\:}
\newcommand{\dis}{\:\is\:}
\newcommand{\finsum}{\bigoplus}
\DeclareMathOperator*{\dcoprod}{\textstyle\coprod}
 \renewcommand{\a}{\alpha}
 \renewcommand{\b}{\beta}
 \renewcommand{\d}{v}            
 \newcommand{\f}{\varphi}
 \newcommand{\g}{\gamma}
 \renewcommand{\l}{\lambda}
\newcommand{\s}{\sigma}
 \newcommand{\Fu}[1][F]{\operatorname{#1}}
 \newcommand{\blank}{{\scriptstyle\stackrel{-}{}}}
 \newcommand{\is}{\cong}
 \newcommand{\qis}{\simeq}
 \renewcommand{\le}{\leqslant}
 \renewcommand{\ge}{\geqslant}
 \newcommand{\onto}{\twoheadrightarrow}
 \newcommand{\into}{\hookrightarrow}
\newcommand{\lra}{\longrightarrow}
\newcommand{\xra}[2][]{\xrightarrow[#1]{\;#2\;}}
\newcommand{\ira}{\xra{\;\is\;}}
\newcommand{\qra}{\xra{\;\qis\;}}
\newcommand{\Rop}{R^\circ}
\newcommand{\mapdef}[4][\rightarrow]{\nobreak{#2\colon #3 #1 #4}}
\newcommand{\qisdef}[4][\xra{\qis}]{\nobreak{#2\colon #3 #1 #4}}
\newcommand{\dmapdef}[4][\lra]{\nobreak{#2\colon #3\:#1\:#4}}
\renewcommand{\Im}[1]{\nobreak{\operatorname{Im}#1}}
\newcommand{\Ker}[1]{\nobreak{\operatorname{Ker}#1}}
\newcommand{\Coker}[1]{\nobreak{\operatorname{Coker}#1}}
\newcommand{\Cone}[1]{\nobreak{\operatorname{Cone}#1}}
\newcommand{\tev}[1]{\omega^{#1}}
\newcommand{\hev}[1]{\theta^{#1}}
\newcommand{\bid}[2]{\delta^{#1}_{#2}}
\newcommand{\sign}[1]{(-1)^{#1}}
\newcommand{\dgr}[1]{|#1|}
\newcommand{\dif}[2][]{{\partial}^{#2}_{#1}}
\newcommand{\Bo}[2][]{\operatorname{B}_{#1}(#2)}
\newcommand{\Cy}[2][]{\operatorname{Z}_{#1}(#2)}
\newcommand{\Co}[2][]{\operatorname{C}_{#1}(#2)}
\renewcommand{\H}[2][]{\operatorname{H}_{#1}(#2)}
\newcommand{\Shift}[2][]{\mathsf{\Sigma}^{#1}{#2}}
\newcommand{\Hom}[3][R]{\operatorname{Hom}_{#1}(#2,#3)}
\newcommand{\tp}[3][R]{\nobreak{#2\otimes_{#1}#3}}
\DeclareMathOperator*{\colim}{colim}
\newcommand{\Catfont}[1]{\mathcal{#1}}
\newcommand{\Cat}[2]{{\mathcal{#2}}(#1)}
\newcommand{\C}[1][R]{\Cat{#1}{C}}
\newcommand{\GrMod}[1][R]{\Cat{#1}{GM}}
\def\@nobreak@#1{\mathchoice%
  {\nobreakdef@\displaystyle\f@size{#1}}%
  {\nobreakdef@\nobreakstyle\tf@size{\firstchoice@false #1}}%
  {\nobreakdef@\nobreakstyle\sf@size{\firstchoice@false #1}}%
  {\nobreakdef@\nobreakstyle\ssf@size{\firstchoice@false #1}}%
  \check@mathfonts}%
\def\nobreakdef@#1#2#3{\hbox{{%
                    \everymath{#1}%
                    \let\f@size#2\selectfont%
                    #3}}}%
\begin{document}

\title{The direct limit closure of perfect complexes}

\author[L.\,W. Christensen]{Lars Winther Christensen}

\address{Texas Tech University, Lubbock, TX 79409, U.S.A.}

\email{lars.w.christensen@ttu.edu}

\urladdr{http://www.math.ttu.edu/\urltilda lchriste}

\author[H. Holm]{Henrik Holm}

\address{University of Copenhagen, 2100 Copenhagen {\O}, Denmark}
 
\email{holm@math.ku.dk}

\urladdr{http://www.math.ku.dk/\urltilda holm/}

\thanks{This work was partly supported by NSA grant H98230-11-0214
  (L.W.C.).}

\date{11 June 2013}

\dedicatory{To Hans-Bj\o rn Foxby---our teacher, colleague, and
  friend}

\keywords{Direct limit; finitely presented complex; perfect complex;
  purity; semi-flat complex; semi-projective complex}

\subjclass[2010]{Primary 16E05. Secondary 13D02; 16E35}


\begin{abstract}
  Every projective module is flat. Conversely, every flat module is a
  direct limit of finitely generated free modules; this was proved
  independently by Govorov and Lazard in the 1960s. In this paper we
  prove an analogous result for complexes of modules, and as
  applications we reprove some results due to Enochs and Garc\'ia
  Rozas and to Neeman.
\end{abstract}

\maketitle

\section{Introduction}

\noindent
Let $R$ be a ring. In contrast to the projective objects in the
category of $R$-modules, i.e.\ the projective $R$-modules, the
projective objects in the category of $R$-complexes are not of much
utility; indeed, they are nothing but contractible (split) complexes
of projective $R$-modules.  In the category of complexes, the relevant
alternative to projectivity---from the homological point of view, at
least---is semi-projectivity. A complex $P$ is called
\emph{semi-projective} (or \emph{DG-projective}) if the the total Hom
functor $\Hom[]{P}{\blank}$ preserves surjective quasi-isomorphisms,
i.e.~surjective morphisms that induce isomorphisms in homology. The
semi-projective complexes are exactly the cofibrant objects in the
standard model structure on the category of complexes; see Hovey
\cite[\S2.3]{modcat}.  Alternatively, a complex is semi-projective if
and only if it consists of projective modules and it is K-projective
in the sense of Spaltenstein~\cite{NSp88}. The notion of
semi-projectivity in the category of complexes extends the notion of
projectivity in the category of modules in a natural and useful way: A
module is projective if and only if it is semi-projective when viewed
as a complex.

Similarly, a complex $F$ is \emph{semi-flat} if the total tensor
product functor $\tp[]{\blank}{F}$ preserves injective
quasi-isomorphisms; equivalently, $F$ is a complex of flat modules and
K-flat in the sense of \cite{NSp88}.  A module is flat if and only if
it is semi-flat when viewed as a complex.  Every semi-projective
complex is semi-flat, and simple examples of semi-projective complexes
are bounded complexes of \fg\ projective modules, also known as
\emph{perfect} complexes. The class of semi-flat complexes is closed
under direct limits, and our main result, \thmref{GL} below, shows
that every semi-flat complex is a direct limit of perfect
complexes. For modules, the theorem specializes to a classic result,
proved independently by Govorov~\cite{VEG65} and Lazard~\cite{DLz69}:
Every flat module is a direct limit of \fg\ free modules.

\pagebreak\begin{thm}
  \label{thm:GL}
  For an $R$-complex $F$ the following conditions are equivalent.
  \begin{eqc}
  \item $F$ is semi-flat.
  \item Every morphism of $R$-complexes $\mapdef{\f}{N}{F}$ with $N$
    bounded and degreewise finitely presented admits a factorization,
    \begin{equation*}
      \xymatrix@!=0.7pc{
        N \ar[dr]_-{\kappa} \ar[rr]^-{\f} & & F \\
        & L\xycomma[,] \ar[ur]_-{\lambda} & 
      }
    \end{equation*}
    where $L$ is a bounded complex of finitely generated free
    $R$-modules.
  \item There exists a set $\{L^u\}_{u \in U}$ of bounded complexes of
    finitely generated free $R$-modules and a pure epimorphism
    $\coprod_{u\in U}L^u \to F$.
  \item $F$ is isomorphic to a filtered colimit of bounded complexes
    of finitely generated free $R$-modules.
  \item $F$ is isomorphic to a direct limit of bounded complexes of
    finitely generated free $R$-modules.
  \end{eqc}
\end{thm}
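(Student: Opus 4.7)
The plan is to run the cycle \eqclbl{i} $\Rightarrow$ \eqclbl{ii} $\Rightarrow$ \eqclbl{iii} $\Rightarrow$ \eqclbl{iv} $\Rightarrow$ \eqclbl{v} $\Rightarrow$ \eqclbl{i}. Three of these implications are essentially formal. For \eqclbl{v} $\Rightarrow$ \eqclbl{i}, every perfect complex is semi-flat (being a bounded complex of flat modules that is K-flat), and, as recalled in the introduction, the class of semi-flat complexes is closed under direct limits. For \eqclbl{iv} $\Rightarrow$ \eqclbl{v}, any filtered colimit of $R$-complexes is isomorphic to a directed colimit by reindexing over the filtered poset of finite subdiagrams of the given diagram. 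For \eqclbl{ii} $\Rightarrow$ \eqclbl{iii}, I would fix a representative set of morphisms $\f_u \colon N_u \to F$ with $N_u$ bounded and degreewise finitely presented, use \eqclbl{ii} to obtain factorizations $\f_u = \l_u \k_u$ through perfect complexes $L^u$, and take $\pi = \coprod_u \l_u$. The map $\pi$ is a pure epimorphism because the lifting characterization of purity for complexes has bounded degreewise finitely presented complexes as test objects, and any such test morphism factors through some $L^u$ by \eqclbl{ii}, after which the factorization lifts into the coproduct.

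For \eqclbl{iii} $\Rightarrow$ \eqclbl{iv}, I would first deduce \eqclbl{ii} from \eqclbl{iii}: any $\f \colon N \to F$ with $N$ bounded and degreewise finitely presented lifts through $\pi$ by purity, and since $N$ is bounded and degreewise finitely generated, the lift lands in a finite subcoproduct of the $L^u$, which is perfect. With \eqclbl{ii} in hand, form the category $\mathcal I$ whose objects are pairs $(L, \l)$ with $L$ perfect and $\l \colon L \to F$, and whose morphisms are the commuting triangles over $F$. Filteredness of $\mathcal I$ follows from two applications of \eqclbl{ii}: first to $L \oplus L' \to F$ to merge two objects into a common successor, and then to the cokernel complex $\Coker{(\alpha - \beta)}$ (which is bounded and degreewise finitely presented since $L$ and $L'$ are) in order to coequalize a parallel pair $\alpha, \beta \colon (L, \l) \to (L', \l')$ after post-composition. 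A direct verification then identifies $F$ with $\colim_{\mathcal I} L$.

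The main obstacle is \eqclbl{i} $\Rightarrow$ \eqclbl{ii}, the genuine chain-level analog of Lazard's lifting lemma. Given a semi-flat $F$ and a morphism $\f \colon N \to F$ with $N$ bounded and degreewise finitely presented, the task is to produce a perfect complex $L$ together with chain maps $\k \colon N \to L$ and $\l \colon L \to F$ satisfying $\l \k = \f$. My approach would be an inductive construction within the bounded window containing $N$: in each degree $i$, the module-level Govorov--Lazard theorem applied to the flat component $F_i$ provides a factorization of $\f_i$ through a finitely generated free module, and these degreewise factorizations must be assembled into an honest chain map rather than a graded map compatible with differentials only up to homotopy. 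The K-flatness side of the semi-flat hypothesis is what should allow the obstructions to chain-level coherence to be resolved at the cost of enlarging $L$, while the boundedness of $N$ is what keeps $L$ bounded. The delicate step---and, I expect, the main source of difficulty---is the bookkeeping needed to preserve finite generation of each component $L_i$ throughout this assembly.
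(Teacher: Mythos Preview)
Your treatment of the implications among \eqclbl{ii}--\eqclbl{v} and of \eqclbl{v}$\Rightarrow$\eqclbl{i} is correct; the paper simply cites \cite[(4.1)]{WCB94} and \cite[thm.~1.5]{AdamekRosicky} for those, so your over-category argument for \eqclbl{ii}$\Rightarrow$\eqclbl{iv} is in fact more explicit than what the paper provides.

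The gap is in \eqclbl{i}$\Rightarrow$\eqclbl{ii}. As described, your degree-by-degree assembly uses only that each $F_\d$ is flat; if it worked, it would show that \emph{every} complex of flat modules satisfies \eqclbl{ii} and hence is semi-flat, which is false. Concretely, the problem is boundedness of $L$. Once you reach the bottom degree $a$ of $N$, the chain-map condition on $\l$ forces $\l_a$ to land in $\Cy[a]{F}$ if you want $L_{a-1}=0$; but $\Cy[a]{F}$ need not be flat, so module-level Lazard gives no factorization through it. The alternative is to let $L$ extend below degree $a$, and then nothing in the module-level argument makes that extension terminate. Your sentence ``the boundedness of $N$ is what keeps $L$ bounded'' is therefore the wrong diagnosis: it is precisely the K-flat hypothesis that must supply boundedness, and your sketch offers no mechanism for this.

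The paper's route is quite different and does not proceed degree by degree. One presents $N$ by perfect complexes, $L^1 \xrightarrow{\psi^1} L^0 \xrightarrow{\psi^0} N \to 0$, dualizes to obtain the $\Rop$-complex $K = \Ker(\Hom{L^0}{R}\to\Hom{L^1}{R})$, and takes a surjective semi-free resolution $\pi\colon P \to K$. Semi-flatness of $F$ enters exactly once, to make $\tp{\pi}{F}$ a surjective quasi-isomorphism, hence surjective on cycles. Tensor evaluation identifies $\Cy[0]{\tp{\Hom{L^i}{R}}{F}}$ with $\C(L^i,F)$, so the morphism $\f\psi^0\colon L^0\to F$ lifts to a cycle $x\in\Cy[0]{\tp{P}{F}}$. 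Because $x$ is a \emph{finite} sum of elementary tensors, it already lies in $\tp{P'}{F}$ for a bounded finitely generated free subcomplex $P'\subseteq P$; setting $L=\Hom[\Rop]{P'}{R}$ and unwinding the identifications yields the desired factorization. Boundedness of $L$ thus comes from finiteness of the support of a single tensor element, not from boundedness of $N$.
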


\noindent
The theorem is proved in \secref{main}. The terminology used in the
statement is clarified in the sections leading up to the proof. In
\secref{fp} we show that the finitely presented objects in the
category of complexes are exactly the bounded complexes of finitely
presented modules. Results of Breitsprecher~\cite{SBr70} and
Crawley-Boevey~\cite{WCB94} show that the category of complexes is
locally finitely presented,
see~\rmkref{C-is-locally-finitely-presented}. Therefore, the
equivalence of \eqclbl{ii}, \eqclbl{iii}, and \eqclbl{iv} follows from
\cite[(4.1)]{WCB94}. Furthermore, a result by Ad{\'a}mek and
Rosick{\'y} \cite[thm.~1.5]{AdamekRosicky} shows that \eqclbl{iv} and
\eqclbl{v} are equivalent for quite general reasons; thus our task is
to prove that the equivalent conditions \eqclbl{ii}--\eqclbl{v} are
also equivalent to \eqclbl{i}.

The characterization of semi-flat complexes in \thmref{GL} opens to a
study of the interplay between semi-flatness and purity in the
category of complexes; this is the topic of \secref{purity}. We show,
for example, that a complex $F$ is semi-flat if and only if every
surjective quasi-isomorphism $M \to F$ is a pure epimorphism. This
compares to Lazard's \corcite[1.3]{DLz69} which states that a module
$F$ is flat if and only if every surjective homomorphism $M \to F$ is
a pure epimorphism.

In the final \secref{8}, we use \thmref{GL} to reprove a few results
due to Enochs and Garc\'ia Rozas \cite{EEEJGR98} and to
Neeman~\cite{ANm08}; our proofs are substantially different from the
originals. In \thmref{contractible} we show that an acyclic semi-flat
complex is a direct limit of contractible perfect complexes. Combined
with a result of Benson and Goodearl~\cite{DJBKRG00} this enables us
to show in \thmref{sfp} that a semi-flat complex of projective modules
is semi-projective.

\section{Complexes}
\label{sec:complexes}

\noindent
In this paper $R$ is a ring, and the default action on modules is on
the left. Thus, $R$-modules are left $R$-modules, while right
$R$-modules are considered to be (left) modules over the opposite ring
$\Rop$. The definitions and results listed in this section are
standard and more details can be found in textbooks, such as Weibel's
\cite{Wei}, and in the paper \cite{LLAHBF91} by Avramov and Foxby.

An $R$-complex $M$ is a graded $R$-module $M = \coprod_{\dinZ}M_\d$
equipped with a differential, that is, an $R$-linear map
$\mapdef{\dif{M}}{M}{M}$ that satisfies $\dif{M}\dif{M}=0$ and
$\dif{M}(M_\d) \subseteq M_{\d-1}$ for every $\dinZ$. The homomorphism
$M_\d \to M_{\d-1}$ induced by $\dif{M}$ is denoted $\dif[\d]{M}$.
Thus, an $R$-complex $M$ can be visualized as follows,
\begin{equation*}
  M \deq 
  \cdots \lra M_{\d+1} \dxra{\dif[\d+1]{M}} M_\d  \dxra{\dif[\d]{M}} M_{\d-1}
  \lra \cdots\;.
\end{equation*}

The category of $R$-complexes is denoted $\C$. We identify the
category of graded $R$-modules with the full subcategory of $\C$ whose
objects are $R$-complexes with zero differential.

For an $R$-complex $M$ with differential $\dif{M}$, set
$\Cy{M}=\Ker{\dif{M}}$, $\Bo{M}=\Im{\dif{M}}$,
$\Co{M}=\Coker{\dif{M}}$, and $\H{M}=\Cy{M}/\Bo{M}$; they are sub-,
quotient, and subquotient complexes of $M$. Furthermore,
$\Cy{\blank}$, $\Bo{\blank}$, $\H{\blank}$ and $\Co{\blank}$ are
additive endofunctors on $\C$.

A complex $M$ with $\H{M}=0$ is called \emph{acyclic}. The
\emph{shift} of $M$ is the complex $\Shift{M}$ with $(\Shift{M})_\d =
M_{\d-1}$ and $\dif[\d]{\Shift{M}} = -\dif[\d-1]{M}$. A morphism
$\mapdef{\a}{M}{N}$ of complexes is called a \emph{quasi-isomorphism}
if $\mapdef{\H{\a}}{\H{M}}{\H{N}}$ is an isomorphism.

\begin{ipg}
  \label{cone}
  To a morphism $\mapdef{\a}{M}{N}$ of $R$-complexes one associates a
  complex $\Cone{\a}$, called the \emph{mapping cone} of $\a$; it fits
  into a degreewise split exact sequence,
  \begin{equation*}
    0 \lra N \lra \Cone{\a} \lra \Shift{M} \lra 0\;.
  \end{equation*}
  The morphism $\a$ is a quasi-isomorphism if and only if $\Cone{\a}$
  is acyclic.
\end{ipg}

\begin{ipg}
  \label{Hom}
  Let $M$ and $N$ be $R$-complexes. The total Hom complex, written
  $\Hom{M}{N}$, yields a functor
  \begin{equation*}
    \dmapdef{\Hom{\blank}{\blank}}{\C^\mathrm{op} \times \C}{\C[\ZZ]}\;.
  \end{equation*}
  The functor $\Hom{M}{\blank}$ commutes with mapping cones, that is,
  for every morphism $\a$ of $R$-complexes there is an isomorphism of
  $\ZZ$-complexes,
  \begin{equation*}
    \Cone{\Hom{M}{\a}} \dis \Hom{M}{\Cone{\a}}\;.
  \end{equation*}
  There is an equality of abelian groups,
  \begin{equation*}
    \Cy[0]{\Hom{M}{N}} \deq \C(M,N)\,,
  \end{equation*}
  where the right-hand side is the hom-set in the category $\C$.
\end{ipg}

\begin{ipg}
  \label{tp}
  Let $M$ be an $\Rop$-complex and let $N$ be an $R$-complex. The
  total tensor product complex, written $\tp{M}{N}$, yields a functor
  \begin{equation*}
    \dmapdef{\tp{\blank}{\blank}}{\C[\Rop] \times \C}{\C[\ZZ]}\;.
  \end{equation*}
  The functor $\tp{M}{\blank}$ commutes with mapping cones, that is,
  for every morphism $\a$ of $R$-complexes there is an isomorphism of
  $\ZZ$-complexes,
  \begin{equation*}
    \Cone{(\tp{M}{\a})} \dis \tp{M}{\Cone{\a}}\,.
  \end{equation*}
\end{ipg}

For a homogeneous element $m$ in a graded module (or a complex) $M$,
we write $\dgr{m}$ for its degree.

\begin{ipg}
  \label{bid}
  Let $M$ be an $R$-complex. The \emph{biduality} morphism
  \begin{gather*}
    \dmapdef{\bid{M}{}}{M}{\Hom[\Rop]{\Hom{M}{R}}{R}}\\
    \intertext{\text{is given by}} \bid{M}{}(m)(\psi) \deq
    \sign{\dgr{\psi}\dgr{m}}\psi(m)
  \end{gather*}
  for homogeneous elements $m \in M$ and $\psi \in \Hom{M}{R}$.

  The morphism $\bid{M}{}$ of $R$-complexes is an isomorphism if $M$
  is a complex of finitely generated projective $R$-modules.
\end{ipg}

\begin{ipg}
  \label{tev}
  Let $M$ and $N$ be $R$-complexes and let $X$ be a complex of
  $R$--$\Rop$-bimodules. The \emph{tensor evaluation} morphism
  \begin{gather*}
    \dmapdef{\tev{MXN}}{\tp{\Hom{M}{X}}{N}}{\Hom{M}{\tp{X}{N}}}\\
    \intertext{is given by} \tev{MXN}(\tp[]{\psi}{n})(m) \deq
    \sign{\dgr{m}\dgr{n}}\tp[]{\psi(m)}{n}
  \end{gather*}
  for homogeneous elements $\psi \in \Hom{M}{X}$, $n \in N$, and $m
  \in M$.

  The morphism $\tev{MXN}$ of $\ZZ$-complexes is an isomorphism if $M$
  is a bounded complex of finitely generated projective $R$-modules
  and $X=R$.
\end{ipg}

\begin{ipg}
  \label{hev}
  Let $M$ be an $R$-complex, let $N$ be an $\Rop$-complex, and let $X$
  be a complex of $R$--$\Rop$-bimodules. The \emph{homomorphism
    evaluation} morphism
  \begin{gather*}
    \dmapdef{\hev{XNM}}{\tp{\Hom[\Rop]{X}{N}}{M}}{\Hom[\Rop]{\Hom{M}{X}}{N}}\\
    \intertext{is given by} \hev{XNM}(\tp[]{\psi}{m})(\f) \deq
    \sign{\dgr{\f}\dgr{m}}\psi\f(m)
  \end{gather*}
  for homogeneous elements $\psi \in \Hom[\Rop]{X}{N}$, $m \in M$, and
  $\f \in \Hom{M}{X}$.

  The morphism $\hev{XNM}$ of $\ZZ$-complexes is an isomorphism if $M$
  is a bounded complex of finitely generated projective $R$-modules
  and $X=R$.
\end{ipg}

\section{Filtered colimits}
\label{sec:colim}

We refer to MacLane \seccite[IX.1]{Mac} for background on colimits.

\begin{dfn}
  Let $\Catfont{A}$ be a category.  By a \emph{filtered colimit} in
  $\Catfont{A}$ we mean the colimit of a functor
  $\mapdef{\Fu}{\Catfont{J}}{\Catfont{A}}$, which is denoted
  $\colim_{J \in \Catfont{J}}\Fu(J)$, where $\Catfont{J}$ is a
  skeletally small filtered category. We reserve the term \emph{direct
    limit} for the colimit of a direct system, i.e.\ of a functor
  $\Catfont{J} \to \Catfont{A}$ where $\Catfont{J}$ is the filtered
  category associated to a directed set, i.e.\ a filtered preordered
  set.
\end{dfn}

Notice that some authors, including Crawley-Boevey \cite{WCB94}, use
the term ``direct limit'' for any filtered colimit. For a direct
system $\{A^u \to A^v\}_{u \le v}$ it is customary to write
$\varinjlim A^u$ for its direct limit, i.e.~its colimit, however, we
shall stick to the notation $\colim A^u$.

Let $\Catfont{A}$ and $\Catfont{B}$ be categories that have (all)
filtered colimits. Recall that a functor
$\mapdef{\Fu[T]}{\Catfont{A}}{\Catfont{B}}$ is said to \emph{preserve
  (filtered) colimits} if the canonical morphism in $\Catfont{B}$,
\begin{equation*}
  \colim_{J \in \Catfont{J}}\Fu[T](\Fu(J)) \lra
  \Fu[T](\colim_{J \in \Catfont{J}}\Fu(J))\,,
\end{equation*}
is an isomorphism for every (filtered) colimit $\colim_{J \in
  \Catfont{J}}\Fu(J)$ in $\Catfont{A}$.

We need a couple of facts about filtered colimits of complexes; the
arguments are given in \cite[lem.~2.6.14 and thm.~2.6.15]{Wei}.

\begin{ipg}
  \label{colim}
  The following assertions hold.
  \begin{prt}
  \item Every homogeneous element in a filtered colimit, $\colim_{J
      \in \Catfont{J}}\Fu(J)$, in $\C$ is in the image of the
    canonical morphism $\Fu(J) \to \colim_{J \in \Catfont{J}}\Fu(J)$
    for some $J \in \Catfont{J}$.
  \item Filtered colimits in $\C$ are exact (colimits are always right
    exact).
  \end{prt}
\end{ipg}

\begin{lem}
  \label{lem:2-3-preserve-filtered-colimit}
  \hspace{-1pt}Let $\Catfont{A}$ be a category with filtered colimits
  and let $\mapdef{\Fu[T]',\Fu[T],\Fu[T]''}{\Catfont{A}}{\C}$ be
  functors. The following assertions hold.
  \begin{prt}
  \item If\, \mbox{$0 \to \Fu[T]' \to \Fu[T] \to \Fu[T]''$} is an
    exact sequence and if $\Fu[T]$ and $\Fu[T]''$ preserve filtered
    colimits, then $\Fu[T]'$ preserves filtered colimits.
  \item If\, \mbox{$\Fu[T]' \to \Fu[T] \to \Fu[T]'' \to 0$} is an
    exact sequence and if $\Fu[T]'$ and $\Fu[T]$ preserve filtered
    colimits, then $\Fu[T]''$ preserves filtered colimits.
  \item If \mbox{$0 \to \Fu[T]' \to \Fu[T] \to \Fu[T]'' \to 0$} is an
    exact sequence and if $\Fu[T]'$ and $\Fu[T]''$ preserve filtered
    colimits, then $\Fu[T]$ preserves filtered colimits.
  \end{prt}
\end{lem}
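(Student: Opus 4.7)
The strategy in each of the three cases is identical: fix a functor $\Fu\colon \Catfont{J}\to\Catfont{A}$ with $\Catfont{J}$ skeletally small filtered, and compare the two natural short sequences
\begin{equation*}
  \colim_{J}\Fu[T]'\Fu(J)\lra \colim_{J}\Fu[T]\Fu(J)\lra \colim_{J}\Fu[T]''\Fu(J)
\end{equation*}
and
\begin{equation*}
  \Fu[T]'(\colim_{J}\Fu(J))\lra \Fu[T](\colim_{J}\Fu(J))\lra \Fu[T]''(\colim_{J}\Fu(J))\,,
\end{equation*}
linked by the canonical morphisms $\mu',\mu,\mu''$ from the top row to the bottom row. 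Since the canonical morphisms are natural in their input, this assembles into a commutative diagram. The key input is that, by \ref{colim}(b), filtered colimits in $\C$ are exact; consequently the top row inherits whatever exactness properties the sequence $\Fu[T]'\to\Fu[T]\to\Fu[T]''$ has objectwise.

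For part~(a), the hypothesis yields that both rows extend to \emph{left} exact sequences starting with $0\to\cdot$. Given that $\mu$ and $\mu''$ are isomorphisms, a direct diagram chase (the left half of the short five-lemma) forces $\mu'$ to be an isomorphism: injectivity follows from injectivity of $\colim\Fu[T]'\Fu(J)\to\colim\Fu[T]\Fu(J)$, and surjectivity follows by lifting an element of $\Fu[T]'(\colim\Fu(J))$ first through $\mu$ and then checking that its preimage lies in $\colim\Fu[T]'\Fu(J)$ using injectivity of $\mu''$. Part~(b) is dual: both rows are now \emph{right} exact sequences ending in $\cdot\to 0$, and with $\mu'$ and $\mu$ isomorphisms the analogous chase (the right half of the short five-lemma) shows that $\mu''$ is an isomorphism. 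Part~(c) combines the two: both rows are short exact, and the classical five-lemma applied to the diagram
\begin{equation*}
\xymatrix@C=1.3pc{
0\ar[r]&\colim_{J}\Fu[T]'\Fu(J)\ar[r]\ar[d]_-{\mu'}^-{\is}&\colim_{J}\Fu[T]\Fu(J)\ar[r]\ar[d]^-{\mu}&\colim_{J}\Fu[T]''\Fu(J)\ar[r]\ar[d]^-{\mu''}_-{\is}&0\\
0\ar[r]&\Fu[T]'(\colim_{J}\Fu(J))\ar[r]&\Fu[T](\colim_{J}\Fu(J))\ar[r]&\Fu[T]''(\colim_{J}\Fu(J))\ar[r]&0
}
\end{equation*}
shows that $\mu$ is an isomorphism.

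There is no substantial obstacle; the only point that needs a brief justification is the exactness of the top row in each case, and this is precisely \ref{colim}(b). Everything else is formal diagram chasing valid in any abelian category. One mild bookkeeping remark: in part~(a) the hypothesis that $\Fu[T]\to\Fu[T]''$ is part of a sequence $0\to\Fu[T]'\to\Fu[T]\to\Fu[T]''$ is only left exact, so its cokernel may not vanish, but this plays no role because the five-lemma argument in (a) only uses the four terms displayed.
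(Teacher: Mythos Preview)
Your proposal is correct and takes essentially the same approach as the paper: set up the commutative diagram of canonical comparison maps, use exactness of filtered colimits in $\C$ (i.e.\ \ref{colim}(b)) to obtain exactness of the top row, and conclude via the Five Lemma. The paper spells out only part~(a) and then remarks that parts~(b) and~(c) have similar proofs, so your more explicit treatment of all three cases is, if anything, a slight expansion rather than a departure.
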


\begin{proof}
  Let $\Catfont{J}$ be a skeletally small filtered category and let
  $\mapdef{\Fu}{\Catfont{J}}{\Catfont{A}}$ be a functor.
  
  \proofoftag{a} Exactness of the sequence \mbox{$0 \to \Fu[T]' \to
    \Fu[T] \to \Fu[T]''$} and left exactness of filtered colimits in
  $\C$ yield the following commutative diagram,
  \begin{equation*}
    \xymatrix{
      0 \ar[r] & 
      \displaystyle\colim_{J \in \Catfont{J}}\Fu[T]'(\Fu(J)) \ar[r] 
      \ar[d]^-{\mu'} &
      \displaystyle\colim_{J \in \Catfont{J}}\Fu[T](\Fu(J)) \ar[r] 
      \ar[d]^-{\mu} &
      \displaystyle\colim_{J \in \Catfont{J}}\Fu[T]''(\Fu(J)) 
      \ar[d]^-{\mu''} \\
      0 \ar[r] & 
      \displaystyle\Fu[T]'(\colim_{J \in \Catfont{J}}\Fu(J)) \ar[r] &
      \displaystyle\Fu[T](\colim_{J \in \Catfont{J}}\Fu(J)) \ar[r] &
      \displaystyle\Fu[T]''(\colim_{J \in \Catfont{J}}\Fu(J))\xycomma 
    }
  \end{equation*}
  where $\mu'$, $\mu$, and $\mu''$ are the canonical morphisms. If
  $\mu$ and $\mu''$ are isomorphisms, then so is $\mu'$ by the Five
  Lemma.

  Parts \prtlbl{b} and \prtlbl{c} have similar proofs.
\end{proof}

\begin{prp}
  \label{prp:ZCBH}
  The functors $\mapdef{\Fu[Z],\Fu[C],\Fu[B],\Fu[H]}{\C}{\C}$ preserve
  filtered colimits.
\end{prp}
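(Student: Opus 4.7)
The plan is to apply \lemref{2-3-preserve-filtered-colimit} to exact sequences of endofunctors on $\C$ built from the identity functor $\Fu[Id]_{\C}$ and the shift functor $\Shift{}\colon \C \to \C$. Both of these preserve filtered colimits: the identity trivially, and the shift because it is an auto-equivalence of $\C$, hence preserves all colimits.

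Viewing the differential as a degree-$0$ chain map $\dif{M}\colon M \to \Shift{M}$ (this uses $\dif{M}\dif{M}=0$ together with the sign on the differential of $\Shift{M}$), one gets a natural transformation $\dif{\blank}\colon \Fu[Id]_{\C} \to \Shift{}$. Its kernel is $\Fu[Z]$; its image, as a subfunctor of $\Shift{}$, is $\Shift{}\Fu[B]$; and its cokernel is $\Shift{}\Fu[C]$. Together with the tautological short exact sequence $0 \to \Fu[B] \to \Fu[Z] \to \Fu[H] \to 0$ coming from $\H{M} = \Cy{M}/\Bo{M}$, this yields three exact sequences of functors $\C \to \C$:
\begin{gather*}
  0 \to \Fu[Z] \to \Fu[Id]_{\C} \to \Shift{}\Fu[B] \to 0, \\
  \Fu[Id]_{\C} \to \Shift{} \to \Shift{}\Fu[C] \to 0, \\
  0 \to \Fu[B] \to \Fu[Z] \to \Fu[H] \to 0.
\end{gather*}

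To finish, I will first apply \lemref{2-3-preserve-filtered-colimit}\,(a) to the left-exact sequence $0 \to \Fu[Z] \to \Fu[Id]_{\C} \to \Shift{}$, whose last two functors preserve filtered colimits, to deduce that $\Fu[Z]$ does. The first displayed sequence and \lemref{2-3-preserve-filtered-colimit}\,(b) then yield the conclusion for $\Shift{}\Fu[B]$, hence for $\Fu[B]$; the second and third displayed sequences give it for $\Fu[C]$ and for $\Fu[H]$, respectively. There is no substantive obstacle---the only thing to watch is the shift bookkeeping forced by insisting that $\dif{\blank}$ be degree-$0$, but since $\Shift{}$ is an auto-equivalence this is harmless.
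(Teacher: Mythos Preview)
Your argument is correct, and it reaches the same conclusion through a genuinely different decomposition. The paper's proof starts from $\Fu[C]$: viewed as a functor from $\C$ to graded $R$-modules, $\Fu[C]$ is left adjoint to the inclusion, so it preserves all colimits. From there the paper applies \lemref{2-3-preserve-filtered-colimit} to the sequences $0 \to \Fu[B] \to \Fu[I] \to \Fu[C] \to 0$, then $0 \to \Fu[H] \to \Fu[C] \to \Shift{\Fu[B]} \to 0$, and finally $0 \to \Fu[B] \to \Fu[Z] \to \Fu[H] \to 0$, obtaining $\Fu[B]$, $\Fu[H]$, $\Fu[Z]$ in that order.

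You instead package the differential as a natural transformation $\Fu[I] \to \Shift{}$ between two colimit-preserving functors and read off $\Fu[Z]$ first as its kernel, then $\Fu[B]$ and $\Fu[C]$ from the image and cokernel, and $\Fu[H]$ last. This is a bit more self-contained---you never invoke an adjunction, and every step is a direct application of \lemref{2-3-preserve-filtered-colimit} to sequences manufactured from $\dif{}$ alone. The paper's route, by contrast, buys the slightly stronger statement that $\Fu[C]$ preserves \emph{all} colimits, not just filtered ones. Either way the bookkeeping with $\Shift{}$ is, as you note, harmless.
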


\begin{proof}
  A graded $R$-module is considered as an $R$-complex with zero
  differential; let $\GrMod$ be the full subcategory of $\C$ whose
  objects are all graded $R$-modules.  Since the inclusion functor
  $\mapdef{\Fu[i]}{\GrMod}{\C}$ preserves filtered colimits, it
  suffices to argue that $\Fu[C]$ preserves filtered colimits when
  viewed as a functor from $\C$ to $\GrMod$. However, this functor
  $\Fu[C]$ has a right adjoint, namely the inclusion functor $\Fu[i]$,
  so it follows from (the dual of) \cite[V\S5 thm.~1]{Mac} that
  $\Fu[C]$ preserves colimits.

  Denote by $\Fu[I]$ the identity functor on $\C$. Since $\Fu[I]$ and
  $\Fu[C]$ preserve filtered colimits,
  \lemref{2-3-preserve-filtered-colimit} applies to the short exact
  sequence $0 \to \Fu[B] \to \Fu[I] \to \Fu[C] \to 0$ to show that
  $\Fu[B]$ preserves filtered colimits. From the short exact
  sequences,
  \begin{equation*}
    0 \lra \Fu[H] \lra \Fu[C] \lra \Shift{\Fu[B]} \lra 0 \qqand
    0 \lra \Fu[B] \lra \Fu[Z] \lra \Fu[H] \lra 0
  \end{equation*}
  we now conclude that $\Fu[H]$ and $\Fu[Z]$ preserve filtered
  colimits as well.
\end{proof}

\begin{prp}
  \label{prp:Hom-colim}
  For every $R$-complex $N$ the functor
  $\mapdef{\tp{\blank}{N}}{\C[\Rop]}{\C[\ZZ]}$ preserves colimits.
  For every bounded $R$-complex $P$ of finitely generated projective
  modules the functor $\mapdef{\Hom{P}{\blank}}{\C}{\C[\ZZ]}$
  preserves colimits.
\end{prp}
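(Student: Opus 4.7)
The plan is to deduce both assertions from the hom--tensor adjunction for complexes, combined in the second part with the tensor evaluation isomorphism from~\pgref{tev} to rewrite $\Hom{P}{\blank}$ as a tensor product functor.

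For the first assertion, I would exhibit $\tp{\blank}{N}\colon \C[\Rop] \to \C[\ZZ]$ as a left adjoint. The candidate right adjoint is $\Hom[\ZZ]{N}{\blank}\colon \C[\ZZ] \to \C[\Rop]$, where $\Hom[\ZZ]{N}{X}$ is endowed with the right $R$-action inherited from the left $R$-action on $N$ by $(\psi r)(n) = \psi(rn)$. The underlying bijection of graded abelian groups is classical, and compatibility with the differentials is a routine sign check using the conventions recorded in~\pgref{Hom} and~\pgref{tp}. Since left adjoints preserve colimits, this gives the first assertion.

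For the second assertion, I would first note the symmetric version of the first part: for any $\Rop$-complex $M$, the functor $\tp{M}{\blank}\colon \C \to \C[\ZZ]$ is left adjoint to $\Hom[\ZZ]{M}{\blank}$ (now carrying a left $R$-action via $(r\psi)(m) = \psi(mr)$) and therefore preserves all colimits. I would then apply this with $M = \Hom{P}{R}$, which is a bounded complex of finitely generated projective $\Rop$-modules. The tensor evaluation morphism $\tev{PRN}\colon \tp{\Hom{P}{R}}{N} \to \Hom{P}{\tp{R}{N}}$ from~\pgref{tev} is an isomorphism by the hypothesis on $P$ and is natural in $N$; composing with the canonical natural isomorphism $\Hom{P}{\tp{R}{N}} \is \Hom{P}{N}$ produces a natural isomorphism of functors $\tp{\Hom{P}{R}}{\blank} \is \Hom{P}{\blank}\colon \C \to \C[\ZZ]$. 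Since the left-hand side preserves colimits, so does $\Hom{P}{\blank}$.

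No step here poses a genuine obstacle; the proof is essentially bookkeeping. The only points requiring care are setting up the bimodule structures correctly so that the adjunctions are well defined, and checking naturality in $N$ of the tensor evaluation isomorphism---both are immediate from the explicit formulas in the preliminary section.
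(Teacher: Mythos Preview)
Your proof is correct and follows essentially the same strategy as the paper: establish that the tensor functor is a left adjoint (with right adjoint $\Hom[\ZZ]{N}{\blank}$), then express $\Hom{P}{\blank}$ as tensoring with the dual complex $\Hom{P}{R}$. The only minor difference is that you reach the natural isomorphism $\Hom{P}{\blank}\is \tp{\Hom{P}{R}}{\blank}$ directly via tensor evaluation~\pgref{tev}, whereas the paper obtains the equivalent isomorphism $\Hom{P}{\blank}\is \tp[\Rop]{\blank}{\Hom{P}{R}}$ by combining biduality~\pgref{bid} with homomorphism evaluation~\pgref{hev}; your route is one step shorter but otherwise the arguments coincide.
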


\enlargethispage*{2\baselineskip}
\begin{proof}
  The functor $\tp{\blank}{N}$ has a right adjoint, namely
  $\Hom[\ZZ]{N}{\blank}$, so it follows from (the dual of) \cite[V\S5
  thm.~1]{Mac} that $\tp{\blank}{N}$ preserves colimits.

  If $P$ is a bounded complex of finitely generated projective
  $R$-modules, then $\Hom{P}{R}$ is a bounded complex of finitely
  generated projective $\Rop$-modules. By \ref{bid} and \ref{hev}
  there are natural isomorphisms of functors from $\C$ to $\C[\ZZ]$,
  \begin{align*}
    \Hom{P}{\blank}
    &\is \Hom{\Hom[\Rop]{\Hom{P}{R}}{R}}{\blank} \\
    &\is \tp[\Rop]{\Hom{R}{\blank}}{\Hom{P}{R}} \\
    &\is \tp[\Rop]{\blank}{\Hom{P}{R}}\,,
  \end{align*}
  and the desired conclusion follows from the first assertion.
\end{proof}

\section{Finitely presented objects in the category of complexes} 
\label{sec:fp}

\noindent
Let $\Catfont{A}$ be an additive category. Following Crawley-Boevey
\cite{WCB94}, an object $A$ in $\Catfont{A}$ is called \emph{finitely
  presented} if the functor $\Catfont{A}(A,\blank)$ preserves filtered
colimits. The category $\Catfont{A}$ is called \emph{locally finitely
  presented} if the category of finitely presented objects in
$\Catfont{A}$ is skeletally small and if every object in $\Catfont{A}$
is a filtered colimit of finitely presented objects; see~\cite{WCB94}.

\begin{dfn}
  For an $R$-module $F$ and $\dinZ$ denote by $\Ct[\d]{F}$ the
  $R$-complex \smash{$0 \lra F \dxra{=} F \lra 0$} concentrated in
  degrees $\d$ and $\d-1$.
\end{dfn}

\begin{con}
  \label{con:fpcx}
  Let $M$ be an $R$-complex. For a homomorphism
  $\mapdef{\pi}{F}{M_\d}$ of $R$-modules, there is a morphism of
  $R$-complexes, $\mapdef{\widetilde{\pi}}{\Ct[\d]{F}}{M}$, given by
  \begin{equation*}
    \xymatrix{
      {} & 0 \ar[r] & 
      F \ar[d]^-{\pi} \ar[r]^-{\Id{F}} & 
      F \ar[d]^-{\dif[\d]{M}\pi} \ar[r] & 0 & {} \\
      \cdots \ar[r] & 
      M_{\d+1} \ar[r]^-{\dif[\d+1]{M}} & 
      M_{\d} \ar[r]^-{\dif[\d]{M}} & 
      M_{\d-1} \ar[r]^-{\dif[\d-1]{M}} & 
      M_{\d-2} \ar[r] & \cdots\xycomma[.]
    }
  \end{equation*}
\end{con}

\begin{prp}
  \label{prp:fp-complex}
  An $R$-complex $M$ is bounded and degreewise finitely presented if
  and only if there exists an exact sequence of $R$-complexes $L^1 \to
  L^0 \to M \to 0$ where $L^0$ and $L^1$ are bounded complexes of
  finitely generated free modules.
\end{prp}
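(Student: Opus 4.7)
The direction ``if'' is the easy one: a bounded complex of finitely generated free modules is itself bounded and degreewise finitely presented, and in each degree the exact sequence $L^1_\d \to L^0_\d \to M_\d \to 0$ with $L^0_\d$ and $L^1_\d$ finitely generated free exhibits $M_\d$ as finitely presented.

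For the ``only if'' direction, the strategy is to build a two-step free resolution degree by degree using \conref{fpcx}. Assume $M$ is bounded and degreewise finitely presented. For each $\d\in\ZZ$ choose an $R$-linear surjection $\mapdef{\pi_\d}{F_\d}{M_\d}$ with $F_\d$ finitely generated free (and zero when $M_\d=0$). \conref{fpcx} gives morphisms $\mapdef{\widetilde{\pi_\d}}{\Ct[\d]{F_\d}}{M}$, and since only finitely many $F_\d$ are nonzero, the complex
\begin{equation*}
   L^0 \deq \finsum_{\d\in\ZZ}\Ct[\d]{F_\d}
\end{equation*}
is a bounded complex of finitely generated free modules. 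The induced morphism $\mapdef{\l^0}{L^0}{M}$ is surjective in every degree, because in degree $\d$ its restriction to the summand $F_\d \subset \Ct[\d]{F_\d}_\d$ is $\pi_\d$.

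The key step is now to control $K \deq \Ker{\l^0}$. Certainly $K$ is bounded. In each degree $\d$ the short exact sequence $0 \to K_\d \to L^0_\d \to M_\d \to 0$ has $L^0_\d$ finitely generated and $M_\d$ finitely presented, so by the standard ``Schanuel-type'' fact (a finitely generated module surjecting onto a finitely presented one has finitely generated kernel) $K_\d$ is finitely generated. This is the one spot where the hypothesis of finite \emph{presentation} on $M$ is used; without it we would get only a one-step resolution.

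Having established that $K$ is bounded and degreewise finitely generated, I apply the construction of the first paragraph once more to $K$ in place of $M$: pick surjections from finitely generated free modules onto each $K_\d$ and form $L^1 = \finsum_\d \Ct[\d]{F'_\d}$, obtaining a bounded complex of finitely generated free modules together with a degreewise surjective morphism $L^1 \onto K$. Composing with the inclusion $K \into L^0$ yields $\l^1\colon L^1 \to L^0$ with $\Im{\l^1} = K = \Ker{\l^0}$, so $L^1 \to L^0 \to M \to 0$ is exact as desired.
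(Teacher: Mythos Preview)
Your proof is correct and follows essentially the same approach as the paper: build $L^0$ as a finite coproduct of disc complexes $\Ct[\d]{F_\d}$ via \conref{fpcx}, observe that the kernel is bounded and degreewise finitely generated because each $M_\d$ is finitely presented, and repeat the construction to obtain $L^1$. The paper's argument is the same in all essentials; your write-up is slightly more explicit about why the kernel is degreewise finitely generated.
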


\begin{proof}
  The ``if'' part is trivial. To show ``only if'', assume that $M$ is
  a bounded and degreewise finitely presented complex. Since $M$ is,
  in particular, degreewise finitely generated, we can for every
  $\dinZ$ choose a surjective homomorphism
  $\mapdef{\pi^\d}{F^\d}{M_\d}$ where $F^\d$ is finitely generated
  free, and such that $F^\d$ is zero if $M_\d$ is zero. Set $L^0 =
  \coprod_{\dinZ} \Ct[\d]{F^\d}$ and let $\mapdef{\pi}{L^0}{M}$ be the
  unique morphism whose composite, $\pi\e^\d$, with the embedding
  $\mapdef[\into]{\e^\d}{\Ct[\d]{F^\d}}{L^0}$ equals the morphism
  $\tilde{\pi}^\d$ from \conref[]{fpcx}. Evidently, $\pi$ is
  surjective and $L^0$ is a bounded complex of finitely generated free
  modules. Consider the kernel $M' = \Ker{\pi}$. Since $L^0$ is
  bounded, so is $M'$. Furthermore, as $M$ is degreewise finitely
  presented, $M'$ is degreewise finitely generated. Hence the argument
  above shows that there exists a surjective morphism $L^1 \to M'$
  where $L^1$ is a bounded complex of finitely generated free
  modules. The composite $L^1 \onto M' \into L^0$ now yields the
  left-hand morphism in an exact sequence $L^1 \to L^0 \to M \to 0$.
\end{proof}

It is a well-known fact that every module is isomorphic to a direct
limit of finitely presented modules. The following generalization to
complexes can be found in Garc{\'{\i}}a Rozas's \cite[lems.~4.1.1(ii)
and 5.1.1]{JGR99}.

\begin{ipg}
  \label{every-complex-is-colim-of-fp}
  Every $R$-complex is isomorphic to a direct limit of bounded and
  degreewise finitely presented $R$-complexes. \qed
\end{ipg}

The next theorem identifies the finitely presented objects in the
category $\C$, and combined with \pgref{every-complex-is-colim-of-fp}
it shows that this category is locally finitely presented; see
\corref{fp-objects} and \rmkref{C-is-locally-finitely-presented}.

\begin{thm}
  \label{thm:colimHom}
  For an $R$-complex $M$ the following conditions are equivalent.
  \begin{eqc}
  \item $M$ bounded and degreewise finitely presented.
  \item The functor $\Hom{M}{\blank}$ preserves filtered colimits.
  \item The functor $\Hom{M}{\blank}$ preserves direct limits.
  \item The functor $\C(M,\blank)$ preserves filtered colimits.
  \item The functor $\C(M,\blank)$ preserves direct limits.
  \end{eqc}
\end{thm}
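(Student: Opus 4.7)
The plan is to establish the cycle $\eqclbl{i} \Rightarrow \eqclbl{ii} \Rightarrow \eqclbl{iii} \Rightarrow \eqclbl{v} \Rightarrow \eqclbl{i}$ together with the side implications $\eqclbl{ii} \Rightarrow \eqclbl{iv} \Rightarrow \eqclbl{v}$. The implications $\eqclbl{ii} \Rightarrow \eqclbl{iii}$ and $\eqclbl{iv} \Rightarrow \eqclbl{v}$ are immediate because every direct limit is a filtered colimit.

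For $\eqclbl{i} \Rightarrow \eqclbl{ii}$, I would invoke \prpref{fp-complex} to secure an exact sequence $L^1 \to L^0 \to M \to 0$ in which $L^0$ and $L^1$ are bounded complexes of finitely generated free $R$-modules. This gives a natural left-exact sequence of functors
\[
0 \lra \Hom{M}{\blank} \lra \Hom{L^0}{\blank} \lra \Hom{L^1}{\blank}.
\]
By \prpref{Hom-colim}, the two right-hand functors preserve all colimits, and in particular filtered colimits. Hence \lemref{2-3-preserve-filtered-colimit}(a) applies to show that $\Hom{M}{\blank}$ preserves filtered colimits as well.

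For both $\eqclbl{ii} \Rightarrow \eqclbl{iv}$ and $\eqclbl{iii} \Rightarrow \eqclbl{v}$, the essential ingredient is the equality $\C(M,N) = \Cy[0]{\Hom{M}{N}}$ from~\pgref{Hom}. The cycle functor $\Cy$ preserves filtered colimits by \prpref{ZCBH}, and since filtered colimits in $\C$ are computed degreewise (see~\pgref{colim}(a)), so does $\Cy[0]$; consequently the preservation property enjoyed by $\Hom{M}{\blank}$ passes through to $\C(M,\blank)$.

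The substantive step is $\eqclbl{v} \Rightarrow \eqclbl{i}$. Using~\pgref{every-complex-is-colim-of-fp}, write $M \cong \colim_{u \in U} M^u$ as a direct limit of bounded and degreewise finitely presented $R$-complexes, with canonical morphisms $\mapdef{\lambda^u}{M^u}{M}$. Hypothesis $\eqclbl{v}$ gives $\C(M,M) \cong \colim_{u}\C(M, M^u)$, so the identity $1_M$ factors as $1_M = \lambda^u \s$ for some $u \in U$ and some $\mapdef{\s}{M}{M^u}$. Thus $M$ is a retract of the bounded and degreewise finitely presented complex $M^u$. Both properties survive retraction: boundedness is trivial, and a retract of a finitely presented module is finitely presented (cleanly: $\Hom{M_\d}{\blank}$ is a retract of $\Hom{M^u_\d}{\blank}$ as a functor on modules and therefore preserves direct limits). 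The main obstacle is precisely this retract step---one has to locate the identity $1_M$ inside $\colim_{u}\C(M, M^u)$ using only the colimit-preservation hypothesis, and then transport the structural properties of $M^u$ to its retract $M$. The remaining implications reduce cleanly to the apparatus already built in Sections~\ref{sec:colim} and~\ref{sec:fp}.
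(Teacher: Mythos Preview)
Your proposal is correct and follows essentially the same route as the paper: the implications \eqclbl{i}$\Rightarrow$\eqclbl{ii}, \eqclbl{ii}$\Rightarrow$\eqclbl{iv}, \eqclbl{iii}$\Rightarrow$\eqclbl{v}, and \eqclbl{v}$\Rightarrow$\eqclbl{i} are proved with the same tools (\prpref{fp-complex}, \lemref{2-3-preserve-filtered-colimit}, the identity $\C(M,\blank)=\Cy[0]{\Hom{M}{\blank}}$ with \prpref{ZCBH}, and the retract argument via \pgref{every-complex-is-colim-of-fp}). One minor remark: the fact that $\Cy[0]$ preserves filtered colimits follows from \prpref{ZCBH} together with the observation that the degree-$0$ evaluation functor $\C\to\Mod$ preserves colimits; your citation of \pgref{colim}(a) for ``computed degreewise'' is not quite the right pointer, though the underlying claim is of course correct.
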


\begin{proof}
  The implications \proofofimp[]{ii}{iii} and \proofofimp[]{iv}{v} are
  trivial. By \ref{Hom} there is for every $R$-complex $M$ an identity
  of functors from $\C$ to $\ZZ$-modules,
  \begin{equation*}
    \C(M,\blank) \deq \Cy[0]{\Hom{M}{\blank}}\;.
  \end{equation*}
  By \prpref{ZCBH} the functor $\Fu[Z]_0$ preserves filtered colimits,
  and hence the implications \proofofimp[]{ii}{iv} and
  \proofofimp[]{iii}{v} follow. It remains to show that the
  implications \proofofimp[]{i}{ii} and \proofofimp[]{v}{i} hold.

  \proofofimp{i}{ii} By \prpref{fp-complex} there is an exact sequence
  $L^1 \to L^0 \to M \to 0$ where $L^0$ and $L^1$ are bounded
  complexes of finitely generated free $R$-modules. Thus there is an
  exact sequence,
  \begin{equation*}
    0 \lra \Hom{M}{\blank} \lra \Hom{L^0}{\blank} \lra \Hom{L^1}{\blank}\,,
  \end{equation*}
  of functors from $\C$ to $\C[\ZZ]$. By \prpref{Hom-colim} the
  functors $\Hom{L^0}{\blank}$ and $\Hom{L^1}{\blank}$ preserve
  filtered colimits, and the conclusion follows from
  \lemref{2-3-preserve-filtered-colimit}.

  \proofofimp{v}{i} By \ref{every-complex-is-colim-of-fp} there is
  a direct system $\{\mapdef{\mu^{vu}}{M^u}{M^v}\}_{u \le v}$ of
  bounded and degreewise \fp\ $R$-complexes with $\colim M^u \is
  M$. By assumption, the canonical morphism
  \begin{equation*}
    \dmapdef{\a}{\colim \C(M,M^u)}{\C(M,\colim M^u) \dis \C(M,M)}
  \end{equation*}
  is an isomorphism. Write
  \begin{equation*}
    \dmapdef{\mu^u}{M^u}{\colim M^u \dis M} \qand
    \dmapdef{\lambda^u}{\C(M,M^u)}{\colim \C(M,M^u)}
  \end{equation*}
  for the canonical morphisms, and note that $\a\lambda^u =
  \C(M,\mu^u)$ holds for all $u$. Surjectivity of $\a$ yields an
  element $\chi \in \colim \C(M,M^u)$ with $\a(\chi) = \Id{M}$.  By
  \ref{colim} one has $\chi=\lambda^u(\psi^u)$ for some $\psi^u \in
  \C(M,M^u)$. Hence, there are equalities $\mu^u\psi^u =
  \C(M,\mu^u)(\psi^u) = \a\lambda^u(\psi^u) = \a(\chi) =
  \Id{M}$. Thus, $M$ is a direct summand of $M^u$, and since $M^u$ is
  bounded and degreewise \fp, so is $M$.
\end{proof}

The equivalences above of \eqclbl{ii} and \eqclbl{iii} and of \eqclbl{iv}
and \eqclbl{v} also follow from general principles; see \cite[cor.~to
thm.~1.5]{AdamekRosicky}.

\begin{cor}
  \label{cor:fp-objects}
  The category $\C$ is locally finitely presented, and the finitely
  presented objects in $\C$ are exactly the bounded and degreewise
  finitely presented $R$-complexes.
\end{cor}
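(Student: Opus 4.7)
The plan is to verify both conditions of the definition of locally finitely presented category using precisely the results already on hand, and to derive the characterization of finitely presented objects directly from \thmref{colimHom}.

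First I would read off the characterization of finitely presented objects from the equivalence \proofofimp[]{i}{iv} in \thmref{colimHom}. By definition, an object $M$ of $\C$ is finitely presented exactly when $\C(M,\blank)$ preserves filtered colimits; but this is condition \eqclbl{iv} of \thmref{colimHom}, which is equivalent to \eqclbl{i}, i.e.~$M$ being bounded and degreewise finitely presented. So the second assertion of the corollary is immediate.

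Next I would check that every $R$-complex is a filtered colimit of finitely presented objects in $\C$. This is exactly \pgref{every-complex-is-colim-of-fp}: every $R$-complex is isomorphic to a direct limit of bounded, degreewise finitely presented $R$-complexes, and direct limits are filtered colimits.

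Finally I would verify that the full subcategory of finitely presented objects in $\C$ is skeletally small. A bounded degreewise finitely presented $R$-complex has only finitely many nonzero components $M_\d$, each a finitely presented $R$-module; the isomorphism classes of finitely presented $R$-modules form a set (they are quotients of some $R^n$ by a finitely generated submodule of some $R^m$), and for any two such modules the set of $R$-linear maps between them is a set. Hence the collection of isomorphism classes of bounded, degreewise finitely presented complexes is a set. Combining these three observations concludes the proof.

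I do not anticipate any real obstacle: the whole statement is essentially a bookkeeping consequence of \thmref{colimHom} and \pgref{every-complex-is-colim-of-fp}, with the only mildly new ingredient being the skeletal smallness, which is straightforward.
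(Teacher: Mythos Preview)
Your proof is correct and follows essentially the same route as the paper's: both invoke the equivalence \eqclbl{i}$\Leftrightarrow$\eqclbl{iv} of \thmref{colimHom} for the characterization of finitely presented objects, and \pgref{every-complex-is-colim-of-fp} for the locally finitely presented part. The only difference is that you spell out the skeletal smallness argument, whereas the paper dismisses it with ``Evidently, the category of such complexes is skeletally small.''
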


\begin{proof}
  By the equivalence of \eqclbl{i} and \eqclbl{iv} in
  \thmref{colimHom}, the finitely presented objects in the category
  $\C$ are exactly the bounded and degreewise finitely presented
  $R$-complexes. Evidently, the category of such complexes is
  skeletally small. By \ref{every-complex-is-colim-of-fp} every
  object in $\C$ is a filtered colimit (even a direct limit) of
  finitely presented objects.
\end{proof}

\begin{rmk}
  \label{rmk:C-is-locally-finitely-presented}
  The fact that $\C$ is locally finitely presented also follows from
  \cite[Satz 1.5]{SBr70} and \cite[(2.4)]{WCB94}; indeed, $\C$ is a
  Grothendieck category and \mbox{$\setof{\Ct[u]{R}\!}{\!u\in\ZZ}$} is
  a generating set of finitely presented objects.
\end{rmk}

\section{Proof of the main theorem}
\label{sec:main}

\noindent
The notion of semi-flatness, and the related notions of
semi-projectivity and semi-freeness, originate in the treatise
\cite{dga} by Avramov, Foxby, and Halperin.

A graded $R$-module $L$ is called \emph{graded-free} if it has a
\emph{graded basis}, that is, a basis consisting of homogeneous
elements. It is easily seen that $L$ is graded-free if and only if
every component $L_\d$ is a free $R$-module.

\begin{ipg}
  \label{semi-free}
  An $R$-complex $L$ is called %
  \emph{semi-free} if the underlying graded $R$-module has a graded
  basis $E$ that can be written as a disjoint union \smash{$E =
    \biguplus_{n\ge 0} E^n$} such that one has \smash{$E^0 \subseteq
    \Cy{L}$} and \smash{$\dif{L}(E^n) \subseteq
    \submod{\bigcup_{i=0}^{n-1} E^i}$} for every $n \ge 1$. Such a
  basis is called a \emph{semi-basis} of $L$.
\end{ipg}

\begin{exa}
  \label{exa:semi-free}
  A bounded below complex of free modules is semi-free.
\end{exa}

\begin{ipg}
  \label{semifree-res}
  Every $R$-complex $M$ has a \emph{semi-free resolution}, that is, a
  quasi-isomorphism of $R$-complexes $\mapdef{\pi}{L}{M}$ where $L$ is
  semi-free.  Moreover, $\pi$ can be chosen surjective and with
  $L_\d=0$ for all $\d < \inf\setof{n\in\ZZ}{M_n\ne
    0}$. See~\cite[thm.~2.2]{dga}.
\end{ipg}

\begin{ipg}
  \label{semi-projective}
  For an $R$-complex $P$ the following conditions are equivalent.
  \begin{eqc}
  \item The functor $\Hom{P}{\blank}$ is exact and preserves
    quasi-isomorphisms.
  \item For every morphism $\mapdef{\a}{P}{N}$ and for every
    surjective quasi-isomorphism $\mapdef{\b}{M}{N}$ there exists a
    morphism $\mapdef{\g}{P}{M}$ such that $\a = \b\g$ holds.
  \item $P$ is a complex of projective $R$-modules, and the functor
    $\Hom{P}{\blank}$ preserves acyclicity.
  \end{eqc}
  A complex that satisfies these equivalent conditions is called
  \emph{semi-projective}; see~\cite[thm.~3.5]{dga}.
\end{ipg}

\begin{exa}
  \label{exa:semi-projective}
  A bounded below complex of projective modules is semi-projective.

  By \thmcite[3.5]{dga} a semi-free complex is semi-projective.
\end{exa}

\begin{ipg}
  \label{semi-flat}
  For an $R$-complex $F$ the following conditions are equivalent.
  \begin{eqc}
  \item The functor $\tp{\blank}{F}$ is exact and preserves
    quasi-isomorphisms.
  \item $F$ is a complex of flat $R$-modules and the functor
    $\tp{\blank}{F}$ preserves acyclicity.
  \end{eqc}
  A complex that satisfies these equivalent conditions is called
  \emph{semi-flat}; see~\cite[thm.~6.5]{dga}.
\end{ipg}

\begin{exa}
  \label{exa:semi-flat}
  A bounded below complex of flat modules is semi-flat.

  By \lemcite[7.1]{dga} a semi-projective complex is semi-flat.
\end{exa}

As noted in \pgref{bid}, the biduality morphism $\bid{P}{}$ is an
isomorphism for every complex $P$ of finitely generated projective
modules. For the proof of \thmref{GL} we need an explicit description
of the inverse.

\begin{lem}
  \label{lem:bid-inv}
  For a complex $P$ of \fg\ projective $R$-modules, the inverse of the
  isomorphism $\bid{\Hom{P}{R}}{}$ is $\Hom{\bid{P}{}}{R}$.
\end{lem}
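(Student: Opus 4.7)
The plan is to verify by direct computation that
$$
\Hom{\bid{P}{}}{R}\circ\bid{\Hom{P}{R}}{} \deq \Id{\Hom{P}{R}},
$$
and then invoke \ref{bid} to promote this one-sided identity to a full inverse. Indeed, since $P$ is a complex of \fg\ projective $R$-modules, $\Hom{P}{R}$ is a complex of \fg\ projective $\Rop$-modules; hence by \ref{bid} both $\bid{P}{}$ and $\bid{\Hom{P}{R}}{}$ are isomorphisms, and so $\Hom{\bid{P}{}}{R}$ is an isomorphism as well. Any left inverse of an isomorphism is the two-sided inverse, so the identity displayed above suffices.

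To carry out the verification, I would fix homogeneous elements $\psi\in\Hom{P}{R}$ and $p\in P$ and unfold the composite. By definition, $\Hom{\bid{P}{}}{R}$ sends a functional $\Phi$ to $\Phi\circ\bid{P}{}$, with no sign (since $\bid{P}{}$ has degree zero). Hence
$$
\bigl(\Hom{\bid{P}{}}{R}\bigl(\bid{\Hom{P}{R}}{}(\psi)\bigr)\bigr)(p)
\deq \bid{\Hom{P}{R}}{}(\psi)\bigl(\bid{P}{}(p)\bigr).
$$
Now apply the biduality formula from \ref{bid} over $\Rop$: since $\bid{P}{}(p)$ has degree $\dgr{p}$, this evaluates to $\sign{\dgr{p}\dgr{\psi}}\bid{P}{}(p)(\psi)$. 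A second application of the biduality formula, this time over $R$, rewrites $\bid{P}{}(p)(\psi)$ as $\sign{\dgr{\psi}\dgr{p}}\psi(p)$. The two signs coincide and cancel, leaving $\psi(p)$, which is exactly $\Id{\Hom{P}{R}}(\psi)(p)$.

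I expect the only real subtlety is the bookkeeping of degrees and signs across the two biduality formulae—one applied to $\psi$ as an element of the $\Rop$-complex $\Hom{P}{R}$, the other applied to $p$ as an element of the $R$-complex $P$. Conceptually the identity is a triangle identity for the contravariant self-adjunction given by $\Hom{\blank}{R}$ and $\Hom[\Rop]{\blank}{R}$, which explains why the signs must cancel; but the bookkeeping is short enough that the direct computation outlined above is the most transparent way to present it.
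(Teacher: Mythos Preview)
Your proof is correct and follows essentially the same route as the paper: both verify the one-sided identity $\Hom{\bid{P}{}}{R}\circ\bid{\Hom{P}{R}}{}=\Id{\Hom{P}{R}}$ by direct evaluation on elements and then conclude via the fact that both maps are isomorphisms. The only difference is cosmetic: the paper suppresses the two sign factors $(-1)^{\dgr{p}\dgr{\psi}}$ arising from the biduality formula (they cancel), whereas you track them explicitly.
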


\begin{proof} 
  As $P$ is a complex of \fg\ projective $R$-modules, $\bid{P}{}$ and
  hence $\Hom{\bid{P}{}}{R}$ are isomorphisms by \ref{bid}.  For $\f$
  in $\Hom{P}{R}$ and $x$ in $P$ one has
  \begin{align*}
    (\Hom{\bid{P}{}}{R}\bid{\Hom{P}{R}}{})(\psi)(x)
    &= (\bid{\Hom{P}{R}}{}(\psi)\bid{P}{})(x)\\
    &= \bid{\Hom{P}{R}}{}(\psi)(\bid{P}{}(x))\\
    &= \bid{P}{}(x)(\psi)\\
    &= \psi(x) \,,
  \end{align*}
  so \smash{$\Hom{\bid{P}{}}{R}\bid{\Hom{P}{R}}{}$} is the identity on
  $\Hom{P}{R}$.
\end{proof}

Condition \eqclbl{iii} in \thmref{GL} asserts the existence of a
certain pure epimorphism in $\C$. In the proof below, we use that the
equivalence of conditions \eqclbl{ii}--\eqclbl{v} has been
established elsewhere and we do not directly address
\eqclbl{iii}. However, in the next section we study the relationship
between purity and semi-flatness; in particular, we recall the
definition of a pure epimorphism from \cite[\S3]{WCB94} in the first
paragraph of \secref{purity}.

\begin{proof}[Proof of \thmref{GL}]
  By \corref{fp-objects} the category $\C$ is locally finitely
  presented and its finitely presented objects are exactly the bounded
  and degreewise finitely presented complexes. It now follows from
  \cite[(4.1)]{WCB94} that \eqclbl{ii}, \eqclbl{iii}, and \eqclbl{iv}
  are equivalent. Furthermore, \cite[thm.~1.5]{AdamekRosicky} shows
  that \eqclbl{iv} and \eqclbl{v} are equivalent.  The remaining
  implications \proofofimp[]{i}{ii} and \proofofimp[]{v}{i} are proved
  below.

  \proofofimp{i}{ii} Let $\mapdef{\f}{N}{F}$ be a morphism of
  $R$-complexes where $N$ is bounded and degreewise finitely
  presented.  By \prpref{fp-complex} there is an exact sequence,
  \begin{equation*}
    L^1 \xra{\psi^1} L^0 \xra{\psi^0} N \lra 0 \,,
  \end{equation*}
  of $R$-complexes, where $L^0$ and $L^1$ are bounded complexes of
  finitely generated free modules. Consider the exact sequence of
  $\Rop$-complexes,
  \begin{equation}
    \label{eq:1c}
    0 \lra K \xra{\iota} \Hom{L^0}{R} \dxra{\Hom[]{\psi^1}{R}} \Hom{L^1}{R}\,,
  \end{equation}
  where $K$ is the kernel of $\Hom{\psi^1}{R}$ and $\iota$ is the
  embedding.  The functor $\Cy[0]{\blank}$ is left exact, and the
  functor $\tp{\blank}{F}$ is exact by definition, so it follows that
  the functor $\Cy[0]{\tp{\blank}{F}}$ leaves the sequence \eqref{1c}
  exact.  As $L^0$ is bounded, so is $K$; set
  $u=\inf\setof{n\in\ZZ}{K_n\ne 0}$.  By \ref{semifree-res} there is
  an exact sequence,
  \begin{equation}
    \label{eq:2}
    P \xra{\pi} K \lra 0\,,
  \end{equation}
  where $\pi$ is a quasi-isomorphism and $P$ is a semi-free
  $\Rop$-complex with $P_\d=0$ for all $\d<u$.  As $F$ is semi-flat,
  $\tp{\pi}{F}$ is a surjective quasi-isomorphism
  by~\ref{semi-flat}. A simple diagram chase shows that every
  surjective quasi-isomorphism is surjective on cycles, so the functor
  $\Cy[0]{\tp{\blank}{F}}$ leaves the sequence \eqref{2}
  exact. Consequently, there is an exact sequence,
  \begin{equation*}
    \Cy[0]{\tp{P\mspace{-2mu}}{\mspace{-2mu}F}}
    \dxra{\mspace{-5mu}\tp[]{(\iota\pi)}{F}\mspace{-5mu}} 
    \Cy[0]{\tp{\Hom{L^0}{R}\mspace{-2mu}}{\mspace{-2mu}F}} 
    \dxra{\mspace{-5mu}\tp[]{\Hom[]{\psi^1}{R}}{F}\mspace{-5mu}} 
    \Cy[0]{\tp{\Hom{L^1}{R}\mspace{-2mu}}{\mspace{-2mu}F}}\;.
  \end{equation*}

  For every $R$-complex $M$, denote by $\xi^M$ the composite morphism
  \begin{equation*}
    \tp{\Hom{M}{R}}{F} \dxra{\tev{MRF}}
    \Hom{M}{\tp{R}{F}} \ira
    \Hom{M}{F}\,,
  \end{equation*}
  where $\tev{MRF}$ is the tensor evaluation morphism \ref{tev} and
  the isomorphism is induced by the canonical one $\tp{R}{F} \is
  F$. The morphism $\xi^M$ is natural in $M$, and by \ref{tev} it is
  an isomorphism if $M$ is a bounded complex of finitely generated
  projective modules.  The exact sequence above now yields another
  exact sequence,
  \begin{equation}
    \label{eq:3}
    \Cy[0]{\tp{P}{F}} \dxra{\xi^{L^0} \circ\,(\tp[]{(\iota\pi)}{F})} 
    \Cy[0]{\Hom{L^0}{F}} \dxra{\Hom[]{\psi^1}{F}} 
    \Cy[0]{\Hom{L^1}{F}}\;.
  \end{equation}

  As $\mapdef{\f\psi^0}{L^0}{F}$ is a morphism, it is an element in
  $\Cy[0]{\Hom{L^0}{F}}$; see~\ref{Hom}.  Since one has
  $\Hom{\psi^1}{F}(\f\psi^0) = \f\psi^0\psi^1 = 0$, exactness of
  \eqref{3} yields an element $x$ in $\Cy[0]{\tp{P}{F}}$ with
  \begin{equation}
    \label{eq:4}
    (\xi^{L^0} \circ (\tp{(\iota\pi)}{F}))(x) \deq \f\psi^0\;.
  \end{equation}
  The graded module underlying $P$ has a graded basis $E$, and $x$ has
  the form $x=\sum_{i=1}^n e_i \otimes f_i$ with $e_i \in E$ and $f_i
  \in F$. Set $w=\max\{\dgr{e_1},\ldots,\dgr{e_n}\}$; as one has
  $P_\d=0$ for all $\d<u$, each basis element $e_i$ satisfies $u \le
  \dgr{e_i} \le w$. For $\d\in\ZZ$ set $E_\d = \setof{e\in E}{\dgr{e}
    = \d}$. Next we define a bounded subcomplex $P'$ of $P$ such that
  each module $P'_\d$ is finitely generated and free. For $\d \notin
  \{u,\ldots,w\}$ set $P'_\d=0$; for $\d \in \{u,\ldots,w\}$ the
  modules $P'_\d$ are constructed inductively.  Let $P'_w$ be the
  finitely generated free submodule of $P_w$ generated by the set
  $E'_w = \{e_1,\ldots,e_n\} \cap E_w $.  For $\d \le w$ assume that a
  finitely generated free submodule $P'_\d$ of $P_\d$ with finite
  basis $E'_\d$ has been constructed.  As the subset $B'_{\d-1} \deq
  \setof{\dif{P}(e)}{e \in E'_\d}$ of $P_{\d-1}$ is finite, there is a
  finite subset $G'_{\d-1}$ of $E_{\d-1}$ with $B'_{\d-1} \subseteq
  \submod[\Rop]{G'_{\d-1}}$.  Now let $P'_{\d-1}$ be the submodule of
  $P_{\d-1}$ generated by the following finite set of basis elements,
  \begin{equation*}
    E'_{\d-1} \deq G'_{\d-1} \cup (\{e_1,\ldots,e_n\} \cap E_{\d-1})\;.    
  \end{equation*}
  By construction, one has $\dif{P}(P'_\d) \subseteq P'_{\d-1}$ for
  all $\dinZ$, so $P'$ is a subcomplex of $P$.  The construction shows
  that $x=\sum_{i=1}^n e_i \otimes f_i$ belongs to $\tp{P'}{F}$.  As
  $F$ is a complex of flat $R$-modules, $\tp{P'}{F}$ is a subcomplex
  of $\tp{P}{F}$, and as the element $x$ is in $\Cy[0]{\tp{P}{F}}$ it
  also belongs to $\Cy[0]{\tp{P'}{F}}$.

  Set $L=\Hom[\Rop]{P'}{R}$. As $P'$ is a bounded complex of finitely
  generated free $\Rop$-modules, $L$ is a bounded complex of finitely
  generated free $R$-modules. Let $\mapdef[\into]{\e}{P'}{P}$ be the
  embedding and let $\mapdef{\kappa'}{L^0}{L}$ be the composite
  morphism
  \begin{equation*}
    L^0 \dxra{\bid{L^0}{}} 
    \Hom[\Rop]{\Hom{L^0}{R}}{R} \dxra{\Hom[]{\iota\pi\e}{R}}
    \Hom[\Rop]{P'}{R} \deq L\;.
  \end{equation*}
  In the commutative diagram
  \begin{equation*}
    \xymatrix@C=6.5pc{
      P' \ar[d]^-{\bid{P'}{}}_-{\is} \ar[r]^-{\iota\pi\e} 
      & \Hom{L^0}{R} \ar[d]^-{\bid{\Hom[]{L^0}{R}}{}}_-{\is}
      \\
      \Hom{\Hom[\Rop]{P'}{R}}{R} 
      \ar[r]^-{\Hom[]{\Hom[]{\iota\pi\e}{R}}{R}} & 
      \Hom{\Hom[\Rop]{\Hom{L^0}{R}}{R}}{R}
    }
  \end{equation*}
  the vertical morphisms are isomorphisms by \ref{bid}, and
  \smash{$\bid{\Hom[]{L^0}{R}}{}$} is by \lemref{bid-inv} the inverse
  of \smash{$\Hom{\bid{L^0}{}}{R}$}. One now has
  \begin{equation}
    \label{eq:5}
    \Hom{\kappa'}{R}\bid{P'}{} \deq \iota\pi\e\;.   
  \end{equation}
  It follows that there are equalities,
  \begin{equation*}
    \Hom{\kappa'\psi^1}{R}\bid{P'}{} \deq
    \Hom{\psi^1}{R}\iota\pi\e \deq 0\pi\e \deq 0\,,
  \end{equation*}
  and since $\bid{P'}{}$ is an isomorphism, the morphism
  $\Hom{\kappa'\psi^1}{R}$ is zero. In particular,
  $\Hom[\Rop]{\Hom{\kappa'\psi^1}{R}}{R}$ is zero, and hence the
  commutative diagram
  \begin{equation*}
    \xymatrix@C=7pc{
      L^1 \ar[d]^-{\bid{L^1}{}}_-{\is} \ar[r]^-{\kappa'\psi^1} 
      & L \ar[d]^-{\bid{L}{}}_-{\is}
      \\
      \Hom[\Rop]{\Hom{L^1}{R}}{R} 
      \ar[r]^-{\Hom[]{\Hom[]{\kappa'\psi^1}{R}}{R}} & 
      \Hom[\Rop]{\Hom{L}{R}}{R} 
    }
  \end{equation*}
  shows that \smash{$\kappa'\psi^1=0$} holds. Again the vertical
  morphisms are isomorphisms by \ref{bid}. Since \smash{$\kappa'$}
  vanishes on \smash{$\Im{\psi^1} = \Ker{\psi^0}$} there is a unique
  morphism $\mapdef{\kappa}{N}{L}$ with $\kappa\psi^0 = \kappa'$.
  Finally, consider the diagram,
  \begin{equation}
    \label{eq:7}
    \begin{gathered}
      \xymatrix@C=2.8pc{ \tp{P'}{F} \ar[d]^-{\tp[]{\e}{F}}
        \ar[r]^-{\tp[]{\bid{P'}{}}{F}} & \tp{\Hom{L}{R}}{F}
        \ar[d]^-{\tp[]{\Hom[]{\kappa'}{R}}{F}} \ar[r]^-{\xi^L} &
        \Hom{L}{F} \ar[d]^-{\Hom{\kappa'}{F}} \\
        \tp{P}{F} \ar[r]^-{\tp[]{(\iota\pi)}{F}} &
        \tp{\Hom{L^0}{R}}{F} \ar[r]^-{\xi^{L^0}} &
        \Hom{L^0}{F}\xycomma }
    \end{gathered}
  \end{equation}
  where the left-hand square is commutative by \eqref{5} and the
  right-hand square is commutative by naturality of $\xi$.  Set
  \begin{equation*}
    \lambda \deq (\xi^L \circ (\tp[]{\bid{P'}{}}{F}))(x)\;;
  \end{equation*}
  it is an element in $\Hom{L}{F}$, and as $x$ belongs to
  $\Cy[0]{\tp{P'}{F}}$, also $\l$ is a cycle;
  i.e.~$\mapdef{\lambda}{L}{F}$ is a morphism. From \eqref{7}, from
  the definition of $\lambda$, and from \eqref{4} one gets
  $\lambda\kappa' = \f\psi^0$.  The identity $\kappa' = \kappa\psi^0$
  and surjectivity of $\psi^0$ now yield $\lambda\kappa=\f$.

  \proofofimp{v}{i} Every bounded complex of finitely generated free
  $R$-modules is semi-flat, see~\exaref{semi-flat}, and as mentioned
  in the introduction a direct limit of semi-flat complexes is
  semi-flat. A proof of this fact can be found in \prpcite[6.9]{dga};
  for completeness we include the argument.  Let \mbox{$\{F^u \to
    F^v\}_{u \le v}$} be a direct system of semi-flat $R$-complexes
  and set $F=\colim F^u$. By \prpref{Hom-colim} there is a natural
  isomorphism of functors, $\tp{\blank}{F} \is
  \colim(\tp{\blank}{F^u})$. By assumption, each functor
  $\tp{\blank}{F^u}$ is exact and preserves acyclicity. Since direct
  limits in $\C[\ZZ]$ are exact, see~\ref{colim}, and since the
  homology functor preserves direct limits, see~\prpref{ZCBH}, it
  follows that the functor $\tp{\blank}{F}$ is exact and preserves
  acyclicity; that is, $F$ is semi-flat.
\end{proof}

\section{Purity}
\label{sec:purity}

\noindent
Let $\Catfont{A}$ be a locally finitely presented category.  Following
\cite[\S3]{WCB94} a short exact sequence $0 \to M' \to M \to M'' \to
0$ in $\Catfont{A}$ is called a \emph{pure} if
\begin{equation*}
  0 \lra \Catfont{A}(A,M') \lra \Catfont{A}(A,M) \lra
  \Catfont{A}(A,M'') \lra 0 
\end{equation*}
is exact for every finitely presented object $A$ in $\Catfont{A}$. In
this case, the morphism $M' \to M$ is called a \emph{pure
  monomorphism} and $M \to M''$ is called \emph{pure epimorphism}.

In view of \corref{fp-objects}, a morphism $\mapdef{\a}{X}{Y}$ in $\C$
is a pure epimorphism if and only if for every morphism
$\mapdef{\f}{N}{Y}$ with $N$ bounded and degreewise \fp\ there exists
a morphism $\mapdef{\b}{N}{X}$ with $\f = \a\b$.

Semi-flat complexes have the following two-out-of-three property; see
the proof of \prpcite[6.7]{dga}.

\begin{ipg}
  \label{2-3-semiflat}
  Let $0 \to F' \to F \to F'' \to 0$ be an exact sequence of
  $R$-complexes.  If $F''$ is semi-flat, then $F'$ is semi-flat if and
  only if $F$ is semi-flat.
\end{ipg}

The next result supplements \ref{2-3-semiflat}; it shows that the
class of semi-flat complexes is closed under pure subcomplexes and
pure quotient complexes.

\begin{prp}
  \label{prp:pure}
  Let \mbox{$0 \to F' \to F \to F'' \to 0$} be a pure exact sequence
  of $R$-complexes. If the complex $F$ is semi-flat, then $F'$ and
  $F''$ are semi-flat.
\end{prp}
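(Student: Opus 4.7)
The plan is to first establish that $F''$ is semi-flat and then deduce the semi-flatness of $F'$ from the two-out-of-three property recorded in \ref{2-3-semiflat}.

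For $F''$, I will use the equivalence \eqclbl{i}$\iff$\eqclbl{iii} in \thmref{GL}. Since $F$ is semi-flat, that equivalence gives a set $\{L^u\}_{u\in U}$ of bounded complexes of finitely generated free modules together with a pure epimorphism $\pi \colon \coprod_{u\in U} L^u \to F$. The hypothesis supplies a second pure epimorphism $\rho \colon F \to F''$. I then observe that the composite of two pure epimorphisms is again a pure epimorphism: in view of \corref{fp-objects}, if $N$ is a bounded degreewise finitely presented $R$-complex and $\varphi \colon N \to F''$ is any morphism, then pureness of $\rho$ lifts $\varphi$ through $\rho$, and pureness of $\pi$ then lifts the result through $\pi$; hence $\rho \circ \pi \colon \coprod L^u \to F''$ has the required lifting property, so it is a pure epimorphism. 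Applying \eqclbl{iii}$\implies$\eqclbl{i} of \thmref{GL} to $\rho\pi$ yields that $F''$ is semi-flat.

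For $F'$, the exact sequence $0 \to F' \to F \to F'' \to 0$ now has both $F$ and $F''$ semi-flat. Hence \ref{2-3-semiflat} immediately implies that $F'$ is semi-flat as well.

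There is no genuine obstacle here; the argument is a direct assembly of results already in place. The one conceptual step that must be flagged is that composition of pure epimorphisms is a pure epimorphism, which is transparent from the lifting description of pure epimorphisms in $\C$ furnished by \corref{fp-objects}. Everything else is formal invocation of \thmref{GL} and \ref{2-3-semiflat}.
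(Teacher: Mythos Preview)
Your proof is correct and follows the same overall strategy as the paper: first show $F''$ is semi-flat via \thmref{GL}, then obtain $F'$ from \ref{2-3-semiflat}. The only difference is which equivalent condition of \thmref{GL} is invoked---the paper uses condition \eqclbl{ii} directly (lifting $\varphi$ through the pure epimorphism $F\to F''$ and then factoring through a perfect complex), whereas you route through condition \eqclbl{iii} and the observation that pure epimorphisms compose; both are equally valid and of comparable length.
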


\begin{proof}
  Assume that $F$ is semi-flat and denote the given morphism from $F$
  to $F''$ by $\a$.  Let $\mapdef{\f}{N}{F''}$ be a morphism where $N$
  is a bounded and degreewise finitely presented $R$-complex.  Since
  $\a$ is a pure epimorphism one has $\f = \a\b$ for some morphism
  $\mapdef{\b}{N}{F}$. Since $F$ is semi-flat, the morphism $\b$, and
  hence also $\f$, factors through a bounded complex of finitely
  generated free $R$-modules. Thus $F''$ is semi-flat by
  \thmref{GL}. It now follows from \ref{2-3-semiflat} that $F'$ is
  semi-flat as well.
\end{proof}

Every surjective homomorphism $M \to F$ of $R$-modules with $F$ flat
is a pure epimorphism; see \corcite[1.3]{DLz69}. The next example
shows that a surjective morphism $M \to F$ of $R$-complexes with $F$
semi-flat need not be a pure epimorphism.

\begin{exa}
  Consider the $\ZZ$-complexes, $\Ct[0]{\ZZ}$ and $\ZZ$. As a
  $\ZZ$-complex, $\ZZ$ is semi-flat by \exaref{semi-flat}. The
  surjective morphism $\mapdef{\pi}{\Ct[0]{\ZZ}}{\ZZ}$, given by the
  diagram
  \begin{equation*}
    \xymatrix{
      0 \ar[r] & \ZZ \ar[d]^-{\Id{\ZZ}} \ar[r]^-{\Id{\ZZ}} 
      & \ZZ \ar[d] \ar[r] & 0 \\
      0 \ar[r] & \ZZ \ar[r] & 0 \ar[r] & 0\xycomma 
    }
  \end{equation*}
  is not a pure epimorphism. Indeed, the complex $\ZZ$ is finitely
  presented but the identity morphism $\ZZ \to \ZZ$ does not factor
  through $\pi$; in other words, $\pi$ is not a split surjection.
\end{exa}

What can be salvaged is captured in the next proposition.

\begin{prp}
  \label{prp:pure-semiflat}
  For an $R$-complex $F$ the following conditions are equivalent.
  \begin{eqc}
  \item $F$ is semi-flat.
  \item Every surjective quasi-isomorphism $M \to F$ a is pure
    epimorphism.
  \item There exists a semi-free complex $L$ and a quasi-isomorphism
    $L \to F$ which is also a pure epimorphism.
  \end{eqc}
\end{prp}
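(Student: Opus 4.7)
The plan is to close the cycle \eqclbl{i}$\Rightarrow$\eqclbl{ii}$\Rightarrow$\eqclbl{iii}$\Rightarrow$\eqclbl{i}, relying on \thmref{GL}, the existence of semi-free resolutions~\ref{semifree-res}, and \prpref{pure}.

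For \proofofimp[]{i}{ii}, let \mbox{$\pi\colon M\to F$} be a surjective quasi-isomorphism and let \mbox{$\varphi\colon N\to F$} be a morphism with $N$ bounded and degreewise finitely presented. Because $F$ is semi-flat, \thmref{GL} produces a factorization $\varphi = \lambda\kappa$ through some bounded complex $L$ of finitely generated free $R$-modules, via maps $\kappa\colon N\to L$ and $\lambda\colon L\to F$. Such an $L$ is semi-projective by \exaref{semi-projective}, so the defining lifting property in \ref{semi-projective}\eqclbl{ii} applied to $\lambda$ and $\pi$ yields $\gamma\colon L\to M$ with $\pi\gamma=\lambda$. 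Setting $\beta=\gamma\kappa$ gives $\pi\beta=\varphi$, and using the description of pure epimorphisms in $\C$ recalled just after the definition, this shows $\pi$ is a pure epimorphism.

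For \proofofimp[]{ii}{iii}, invoke \ref{semifree-res} to obtain a surjective quasi-isomorphism \mbox{$\pi\colon L\to F$} with $L$ semi-free; by \eqclbl{ii} it is a pure epimorphism.

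For \proofofimp[]{iii}{i}, let \mbox{$\pi\colon L\to F$} be as in \eqclbl{iii} and consider the associated pure exact sequence
\begin{equation*}
  0 \lra K \lra L \xra{\pi} F \lra 0\,.
\end{equation*}
The complex $L$ is semi-free, hence semi-projective by \exaref{semi-projective}, and therefore semi-flat by \exaref{semi-flat}. Applying \prpref{pure} to this pure exact sequence yields that $F$ is semi-flat.

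There is no real obstacle; the only step that requires any thought is \proofofimp[]{i}{ii}, and the work there is carried entirely by \thmref{GL}, which reduces the pure-epimorphism test to lifting a map out of a bounded complex of finitely generated free modules—something immediate from the lifting characterization of semi-projectivity.
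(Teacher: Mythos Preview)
Your proof is correct and follows the paper's argument essentially verbatim: the same cycle \eqclbl{i}$\Rightarrow$\eqclbl{ii}$\Rightarrow$\eqclbl{iii}$\Rightarrow$\eqclbl{i}, the same use of \thmref{GL} and the semi-projective lifting property for \proofofimp[]{i}{ii}, the same appeal to \ref{semifree-res} for \proofofimp[]{ii}{iii}, and the same invocation of \prpref{pure} (together with the fact that semi-free complexes are semi-flat) for \proofofimp[]{iii}{i}.
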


\begin{proof}
  \proofofimp{i}{ii} Let $\mapdef{\a}{M}{F}$ be a surjective
  quasi-isomorphism and $\mapdef{\f}{N}{F}$ be a morphism with $N$
  bounded and degreewise \fp. Since $F$ is semi-flat there exists by
  \thmref{GL} a bounded complex $L$ of \fg\ free $R$-modules and
  morphisms $\mapdef{\k}{N}{L}$ and $\mapdef{\l}{L}{F}$ with $\f =
  \l\k$. As $L$ is semi-projective, see \exaref[Examples~]{semi-free}
  and \exaref[]{semi-projective}, there exists by
  \ref{semi-projective} a morphism $\mapdef{\g}{L}{M}$ with $\l=\a\g$,
  so with $\b=\g\k$ one has $\f = \a\b$.

  \proofofimp{ii}{iii} Immediate from \ref{semifree-res}.

  \proofofimp{iii}{i} Follows from \prpref{pure} as a semi-free
  complex is semi-flat.
\end{proof}

\section{Semi-flat complexes of projective modules}
\label{sec:8}

\noindent
A semi-free complex is semi-projective, see~\exaref{semi-projective},
but a semi-projective complex of free modules need not be
semi-free. Indeed, the $\ZZ/6\ZZ$-complex,
\begin{equation*}
  \cdots \lra \ZZ/6\ZZ \xra{2} \ZZ/6\ZZ \xra{3} \ZZ/6\ZZ \xra{2} 
  \ZZ/6\ZZ \xra{3} \ZZ/6\ZZ \lra \cdots
\end{equation*}
serves as a counterexample; see \exacite[7.10]{dga}. It turns out that
a semi-flat complex of projective modules is, in fact,
semi-projective. As Murfet notes in his thesis
\corcite[5.14]{DMf-phd}, this follows from work of Neeman \cite{ANm08}
on the homotopy category of flat modules. The purpose of this section
is to provide an alternative proof of this fact.

\begin{ipg}
  \label{contractible}
  For an $R$-complex $C$, the following conditions are equivalent.
  \begin{eqc}
  \item The identity $\Id{C}$ is null-homotopic; that is, it is a
    boundary in $\Hom{C}{C}$.
  \item The exists a degree $1$ homomorphism $\mapdef{\s}{C}{C}$ with
    $\dif{C} = \dif{C}\s\dif{C}$.
  \item There exists a graded $R$-module $B$ with $\Cone{\Id{B}} \is
    C$.
  \end{eqc}
  A complex that satisfies these conditions is called
  \emph{contractible}; see~\seccite[1.4]{Wei}.

  If $C$ is contractible, then so are all complexes $\Hom{C}{X}$,
  $\Hom{X}{C}$, and $\tp{Y}{C}$. Every contractible complex is
  acyclic.
\end{ipg}

\begin{lem}
  \label{lem:contractible}
  Let $N$ be a bounded and degreewise finitely generated $R$-complex,
  and let $C$ be a contractible complex of projective
  $R$-modules. Every morphism $N\to C$ factors as $N \to L \to C$,
  where $L$ is a bounded and contractible complex of finitely
  generated free $R$-modules.
\end{lem}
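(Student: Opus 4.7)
The plan is to exploit the explicit form of $C$ as the cone of the identity on a graded projective module. By \pgref{contractible}\eqclbl{iii}, pick an isomorphism $C \cong \Cone{\Id{B}}$ for a graded $R$-module $B$; then $C_v = B_v \oplus B_{v-1}$ with differential $(b,b') \mapsto (b', 0)$, and each $B_v$, being a direct summand of the projective module $C_v$, is itself projective. The key observation is that giving a chain map from any complex $M$ to $\Cone{\Id{B}}$ amounts to giving a graded $R$-linear map $\tau \colon M \to B$, via the formula $m \mapsto (\tau_v(m),\, \tau_{v-1}\dif{M}(m))$ for $m \in M_v$; the chain-map condition forces the second component to take exactly this form. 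Let $\alpha \colon N \to B$ be the graded map corresponding to $\varphi$, and fix integers $a \le b$ with $N_v = 0$ for $v \notin [a,b]$.

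For each $v \in [a,b]$, the plan is to factor $\alpha_v \colon N_v \to B_v$ through a finitely generated free module. Since $B_v$ is projective, fix a decomposition $G_v = B_v \oplus B'_v$ with $G_v$ free, and write $i_v \colon B_v \into G_v$ and $r_v \colon G_v \to B_v$ for the inclusion and retraction. The submodule $i_v\alpha_v(N_v) \subseteq G_v$ is finitely generated, so it lies in a finitely generated free summand $F_v \subseteq G_v$ spanned by the finitely many basis elements occurring in a chosen set of generators. Let $p_v \colon F_v \to B_v$ be the restriction of $r_v$ and let $\beta_v \colon N_v \to F_v$ be the corestriction of $i_v\alpha_v$; then $p_v\beta_v = r_v i_v \alpha_v = \alpha_v$. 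For $v \notin [a,b]$ set $F_v = 0$.

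Assemble $F = \bigoplus_v F_v$ as a graded $R$-module and put $L = \Cone{\Id{F}}$; this is a bounded contractible complex of finitely generated free $R$-modules, nonzero only in degrees $[a, b+1]$. Let $\pi \colon N \to L$ be the chain map corresponding (via the observation above, with $F$ in place of $B$) to the graded map $\beta \colon N \to F$, and let $\iota \colon L \to C$ be the chain map induced functorially by $p \colon F \to B$, that is, $\iota_v(f, f') = (p_v(f),\, p_{v-1}(f'))$. A direct computation yields $\iota_v\pi_v(n) = (p_v\beta_v(n),\, p_{v-1}\beta_{v-1}\dif{N}(n)) = (\alpha_v(n),\, \alpha_{v-1}\dif{N}(n)) = \varphi_v(n)$, so $\iota\pi = \varphi$ as required.

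The main obstacle is the lifting step: one must factor each $\alpha_v \colon N_v \to B_v$ through a finitely generated free module, which is not automatic because $N_v$ is only finitely generated, not finitely presented or projective. The trick is to use projectivity of $B_v$ to replace the target by a free module $G_v$, and then invoke the elementary fact that any finitely generated submodule of a free module is contained in a finitely generated free summand; the rest is bookkeeping about morphisms into cones of identity maps.
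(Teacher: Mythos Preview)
Your proof is correct and follows essentially the same strategy as the paper's: both identify $C$ with $\Cone{\Id{B}}$ for a graded projective module $B$, embed each $B_v$ as a summand of a free module, and use finite generation of $N$ to cut down to a bounded contractible complex of finitely generated free modules. The only difference is organizational: you make explicit the bijection between chain maps $M \to \Cone{\Id{B}}$ and graded maps $M \to B$ and work degree by degree, whereas the paper passes to the global coproduct $L' \cong \coprod_{v}(\Ct[v+1]{R})^{(E_v)}$ containing $C$ as a summand and factors $N \to C \hookrightarrow L'$ through a finite sub-coproduct.
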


\begin{proof}
  There is a graded $R$-module $P$ with $C \is \Cone{\Id{P}} =
  \coprod_{\d\in\ZZ} \Ct[\d+1]{P_\d}$. In particular, $C$ is a
  coproduct of bounded contractible complexes of projective
  $R$-modules. For each module $P_\d$ there is a complementary module
  $Q_\d$ and a set $E_\d$ such that there is an isomorphism
  $P_\d\oplus Q_\d \is R^{(E_v)}$. Set
  \begin{equation*}
    L' = C \oplus \big(\coprod_{\d\in\ZZ} \Ct[\d+1]{P_\d}\big) 
    \is \coprod_{\d\in\ZZ} (\Ct[\d+1]{R})^{(E_v)}\,.
  \end{equation*}
  A morphism $\mapdef{\a}{N}{C}$ factors through $L'$, and since $N$
  is bounded and degreewise finitely generated, it factors through a
  finite coproduct $L = \finsum_{i=1}^n \Ct[\d_i+1]{R}$. Evidently,
  this is a bounded and contractible complex of finitely generated
  free $R$-modules.
\end{proof}

The next result shows that the complexes characterized in
\thmcite[2.4]{EEEJGR98}, in \thmcite[4.1.3]{JGR99}, and in \cite[fact
2.14]{ANm08} are precisely the acyclic semi-flat complexes. In
\cite{dga} such complexes are called \emph{categorically flat}, in
\cite{JGR99} they are called \emph{flat}, and in \cite{DMfSSl11} they
are called \emph{pure acyclic}.

\begin{thm}
  \label{thm:contractible}
  For an $R$-complex $F$ the following conditions are equivalent.
  \begin{eqc}
  \item $F$ is semi-flat and acyclic.
  \item $F$ is a filtered colimit of bounded and contractible
    complexes of finitely generated free $R$-modules.
  \item $F$ is a direct limit of bounded and contractible complexes of
    finitely generated free $R$-modules.
  \item $F$ is acyclic and $\Bo{F}$ is a complex of flat $R$-modules.
  \end{eqc}
\end{thm}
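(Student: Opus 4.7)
The plan is to prove two cycles of implications. The equivalence \eqclbl{ii} $\Leftrightarrow$ \eqclbl{iii} is immediate from \cite[thm.~1.5]{AdamekRosicky}, exactly as in \thmref{GL}, since $\C$ is locally \fp\ by \corref{fp-objects}. For \eqclbl{iii} $\Rightarrow$ \eqclbl{i}: a bounded contractible complex of \fg\ free $R$-modules is semi-flat by \exaref{semi-flat} and acyclic by \ref{contractible}; filtered colimits preserve semi-flatness by the argument at the end of the proof of \thmref{GL}, and preserve acyclicity because the functor $\Fu[H]$ preserves filtered colimits (\prpref{ZCBH}).

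The main step is \eqclbl{i} $\Rightarrow$ \eqclbl{ii}. Assuming $F$ is semi-flat and acyclic, pick a semi-free resolution $\pi\colon P \to F$ via \ref{semifree-res}. Since $\pi$ is a quasi-isomorphism and $F$ is acyclic, $P$ too is acyclic; as $P$ is semi-projective (\exaref{semi-projective}), the functor $\Hom{P}{\blank}$ preserves quasi-isomorphisms, so $\Hom{P}{P}$ is acyclic and $\Id{P}$ is a boundary, whence $P$ is contractible. Given any morphism $\f\colon N \to F$ with $N$ bounded and degreewise \fp, \thmref{GL} supplies a factorization $\f = \lambda'\kappa'$ through a bounded complex $L'$ of \fg\ free modules. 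Semi-projectivity of $L'$ (\exaref{semi-projective}) lets $\lambda'$ lift through the surjective quasi-isomorphism $\pi$ to a morphism $\widetilde\lambda\colon L' \to P$ with $\pi\widetilde\lambda = \lambda'$. Since $L'$ is bounded and degreewise \fg\ and $P$ is a contractible complex of projective modules, \lemref{contractible} factors $\widetilde\lambda$ as $L' \to L \to P$ with $L$ bounded contractible of \fg\ free modules, and composition yields a factorization of $\f$ through $L$. As bounded contractible complexes of \fg\ free modules are \fp\ objects of $\C$ by \corref{fp-objects}, this factorization property implies \eqclbl{ii} via \cite[(4.1)]{WCB94}, just as in \thmref{GL}.

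For \eqclbl{i} $\Leftrightarrow$ \eqclbl{iv}: assuming \eqclbl{i}, the established \eqclbl{ii} writes $F = \colim L^u$ with each $L^u$ bounded contractible of \fg\ free modules. Contractibility makes each $\Bo[v]{L^u}$ a direct summand of $L^u_v$, hence \fg\ projective; by \prpref{ZCBH} one has $\Bo{F} = \colim \Bo{L^u}$, a filtered colimit of complexes of flat modules, proving \eqclbl{iv}. Conversely, assume \eqclbl{iv}; acyclicity yields $\Cy[v]{F} = \Bo[v]{F}$ and hence a short exact sequence $0 \to \Bo[v]{F} \to F_v \to \Bo[v-1]{F} \to 0$, so each $F_v$ is flat. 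For integers $m \le n$, let $F^{(m,n)}$ denote the bounded subcomplex of $F$ with $F^{(m,n)}_{n+1} = \Bo[n+1]{F}$, $F^{(m,n)}_v = F_v$ for $m \le v \le n$, $F^{(m,n)}_{m-1} = \Bo[m-1]{F}$, and zero otherwise; this is a subcomplex because $\Bo[n+1]{F} = \Cy[n+1]{F}$ and $\dif[m]{F}(F_m) \subseteq \Bo[m-1]{F}$. Each $F^{(m,n)}$ is a bounded complex of flat modules, hence semi-flat by \exaref{semi-flat}, and $F$ is the directed union of the $F^{(m,n)}$. As filtered colimits of semi-flat complexes are semi-flat, $F$ itself is semi-flat.

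The main obstacle is \eqclbl{i} $\Rightarrow$ \eqclbl{ii}: one must upgrade the ``bounded of \fg\ free'' factorization from \thmref{GL} to ``bounded contractible of \fg\ free'', which is achieved by routing through a semi-projective resolution of $F$ whose acyclicity forces contractibility, so that \lemref{contractible} can complete the job.
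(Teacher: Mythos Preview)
Your proof is correct. The argument for \eqclbl{i}$\Rightarrow$\eqclbl{ii} is essentially the paper's, though you inline what the paper packages as \prpref{pure-semiflat}: the paper observes that the surjective semi-free resolution $\pi$ is a pure epimorphism (so $\f$ lifts directly to $P$), whereas you first factor $\f$ through $L'$ via \thmref{GL} and then lift $L'\to F$ through $\pi$ by semi-projectivity of $L'$---the same idea with different bookkeeping. The paper also organizes the equivalences as a single cycle \eqclbl{i}$\Rightarrow$\eqclbl{ii}$\Rightarrow$\eqclbl{iii}$\Rightarrow$\eqclbl{iv}$\Rightarrow$\eqclbl{i}, while you do the cycle \eqclbl{i}$\Leftrightarrow$\eqclbl{ii}$\Leftrightarrow$\eqclbl{iii} and then \eqclbl{i}$\Leftrightarrow$\eqclbl{iv} separately; your \eqclbl{i}$\Rightarrow$\eqclbl{iv} via \eqclbl{ii} is the paper's \eqclbl{iii}$\Rightarrow$\eqclbl{iv}.

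The one genuine divergence is \eqclbl{iv}$\Rightarrow$\eqclbl{i}. The paper invokes the K\"unneth formula \thmcite[3.6.3]{Wei} to see directly that $\tp{M}{F}$ is acyclic whenever $M$ is. You instead exhibit $F$ as the directed union of the bounded subcomplexes $F^{(m,n)}$, each a bounded complex of flat modules and hence semi-flat by \exaref{semi-flat}, and conclude by closure of semi-flatness under filtered colimits. Your route is more self-contained---no external spectral-sequence machinery---and stays within the paper's direct-limit toolkit; the paper's is shorter. Two cosmetic points: the module $\Bo[n+1]{F}$ you place in degree $n{+}1$ of $F^{(m,n)}$ is superfluous (its differential into $F_n$ vanishes and the union is already $F$ without it), though it does no harm; and you should state explicitly that $\pi$ is chosen \emph{surjective} when you invoke \ref{semifree-res}, since you rely on this for the lifting.
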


\begin{proof}
  \proofofimp{i}{ii} By \thmref{colimHom} and \cite[(4.1)]{WCB94} it
  is sufficient to prove that every morphism $\mapdef{\f}{N}{F}$ with
  $N$ bounded and degreewise finitely presented factors through a
  bounded and contractible complex of finitely generated free
  $R$-modules. Fix such a morphism $\f$.  Let
  $\mapdef[\qra]{\pi}{P}{F}$ be a surjective semi-free resolution;
  cf.~\pgref{semifree-res}. As $P$ is acyclic and semi-projective, the
  complex $\Hom{P}{P}$ is acyclic; in particular the morphism $\Id{P}$
  is null-homotopic so $P$ is contractible. The morphism $\pi$ is by
  \prpref{pure-semiflat} a pure epimorphism, so $\f$ factors through
  $P$ and hence, by \lemref{contractible}, through a bounded and
  contractible complex $L$ of finitely generated free $R$-modules.

  \proofofimp{ii}{iii} This follows from
  \cite[thm.~1.5]{AdamekRosicky}.

  \proofofimp{iii}{iv} A direct limit of contractible (acyclic)
  complexes is acyclic, so $F$ is acyclic. In a contractible complex
  $L$ of free $R$-modules, the subcomplex $\Bo{L}$ consists of
  projective $R$-modules. The functor $\Bo{\blank}$ preserves direct
  limits by \prpref{ZCBH}, and a direct limit of projective modules is
  a flat module, so $\Bo{F}$ is a complex of flat $R$-modules.

  \proofofimp{iv}{i} Each sequence $0 \to \Bo[v]{F} \to F_v \to
  \Bo[v-1]{F} \to 0$ is exact, so each module $F_v$ is flat; that is,
  $F$ is an acyclic complex of flat $R$-modules. For an acyclic
  $\Rop$-complex $M$ (actually for any $\Rop$-complex) it follows from
  the K\"unneth formula \thmcite[3.6.3]{Wei} that $\tp{M}{F}$ is
  acyclic. Thus, $F$ is semi-flat by \pgref{semi-flat}.
\end{proof}

In the terminology of \cite{dga} the equivalence of \eqclbl{i} and
\eqclbl{iii} above says that a complex is categorically flat if and
only if it is a direct limit of categorically projective complexes of
\fg\ free modules.

\begin{cor}
  Let $\mapdef{\a}{F}{F'}$ be a quasi-isomorphism between semi-flat
  $R$-com\-plexes. For every bounded and degreewise finitely presented
  $R$-complex $N$ the morphism
  $\mapdef{\Hom{N}{\a}}{\Hom{N}{F}}{\Hom{N}{F'}}$ is a
  quasi-isomorphism.
\end{cor}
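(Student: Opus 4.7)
The plan is to reduce the claim, via the mapping cone, to showing that $\Hom{N}{C}$ is acyclic for a suitable acyclic semi-flat complex $C$, and then to exploit the structure theorem \thmref{contractible} together with the preservation of direct limits by $\Hom{N}{\blank}$ established in \thmref{colimHom}.

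First I would set $C=\Cone{\a}$. Since $\a$ is a quasi-isomorphism, $C$ is acyclic by \ref{cone}. The degreewise split exact sequence
\begin{equation*}
  0 \lra F' \lra C \lra \Shift{F} \lra 0
\end{equation*}
has both $F'$ and $\Shift{F}$ semi-flat (semi-flatness is plainly preserved by shifts, since $\tp{\blank}{\Shift{F}}\is\Shift{(\tp{\blank}{F})}$), so the two-out-of-three property \ref{2-3-semiflat} forces $C$ to be semi-flat as well. Thus $C$ is an acyclic semi-flat complex.

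Next I would apply \thmref{contractible} to write $C \is \colim L^u$ as a direct limit of bounded contractible complexes of finitely generated free $R$-modules. By \ref{contractible}, each $\Hom{N}{L^u}$ is contractible, hence acyclic. Because $N$ is bounded and degreewise finitely presented, \thmref{colimHom} tells us that $\Hom{N}{\blank}$ preserves direct limits, so there is an isomorphism
\begin{equation*}
  \Hom{N}{C} \dis \colim \Hom{N}{L^u}\,.
\end{equation*}
Since filtered colimits in $\C[\ZZ]$ are exact by \ref{colim} (equivalently, by \prpref{ZCBH} the functor $\H{\blank}$ commutes with filtered colimits), a filtered colimit of acyclic complexes is acyclic. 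Therefore $\Hom{N}{C}$ is acyclic.

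Finally, combining \ref{Hom} and \ref{cone}, one has $\Hom{N}{C} = \Hom{N}{\Cone{\a}} \is \Cone{\Hom{N}{\a}}$, and acyclicity of this cone is exactly the assertion that $\Hom{N}{\a}$ is a quasi-isomorphism. The main conceptual step is the first one, where the two-out-of-three property upgrades the acyclic cone to an acyclic \emph{semi-flat} complex so that \thmref{contractible} becomes applicable; everything afterwards is a routine assembly of previously established preservation properties.
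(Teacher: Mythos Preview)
Your proof is correct and follows essentially the same route as the paper: pass to the mapping cone, use \ref{cone} and \ref{2-3-semiflat} to see that $\Cone{\a}$ is acyclic and semi-flat, invoke \thmref{contractible} to express it as a direct limit of contractible perfect complexes, and then combine \thmref{colimHom} with \prpref{ZCBH} to conclude that $\Hom{N}{\Cone{\a}}\is\Cone{\Hom{N}{\a}}$ is acyclic. The paper's argument is the same, only phrased a bit more tersely.
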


\begin{proof}
  By \ref{cone} and \ref{2-3-semiflat} the complex $\Cone{\a}$ is
  acyclic and semi-flat. The functor $\Hom{N}{\blank}$ preserves
  filtered colimits by \thmref{colimHom} and maps contractible
  complexes to contractible complexes. Now it follows from
  \thmref{contractible} and \prpref{ZCBH} that $\Hom{N}{\Cone{\a}}$ is
  acyclic. Since the functor $\Hom{N}{\blank}$ commutes with mapping
  cone, see \ref{Hom}, it follows that the complex
  $\Cone{\Hom{N}{\a}}$ is acyclic, and thus $\Hom{N}{\a}$ is a
  quasi-isomorphism by \ref{cone}.
\end{proof}

The next corollary can be proved similarly; a different proof is given
in \cite[6.4]{dga}.

\begin{cor}
  Let $\mapdef{\a}{F}{F'}$ be a quasi-isomorphism between semi-flat
  $R$-com\-plexes. For every $\Rop$-complex $M$ the morphism
  $\mapdef{\tp{M}{\a}}{\tp{M}{F}}{\tp{M}{F'}}$ is a
  quasi-isomorphism.\qed
\end{cor}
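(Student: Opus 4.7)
The plan is to mirror the proof of the preceding corollary, replacing $\Hom{N}{\blank}$ by $\tp{M}{\blank}$. The starting point is the same: by \ref{cone} combined with \ref{2-3-semiflat}, the mapping cone $\Cone{\a}$ is both acyclic and semi-flat. Since $\tp{M}{\blank}$ commutes with mapping cones by \ref{tp}, one has $\Cone{\tp{M}{\a}} \is \tp{M}{\Cone{\a}}$, so it suffices to show that $\tp{M}{\Cone{\a}}$ is acyclic; then $\tp{M}{\a}$ is a quasi-isomorphism, again by \ref{cone}.

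To establish acyclicity of $\tp{M}{\Cone{\a}}$, apply \thmref{contractible} to the acyclic semi-flat complex $\Cone{\a}$: it can be written as a direct limit $\colim L^u$ of bounded contractible complexes of finitely generated free $R$-modules. The functor $\tp{M}{\blank}$ preserves direct limits—this is the analog of the first half of \prpref{Hom-colim} with the arguments swapped, proved by the same adjoint argument using the right adjoint $\Hom[\ZZ]{M}{\blank}$—and by \ref{contractible} it carries each contractible $L^u$ to a contractible, hence acyclic, $\ZZ$-complex $\tp{M}{L^u}$. Therefore $\tp{M}{\Cone{\a}} \is \colim(\tp{M}{L^u})$ is a direct limit of acyclic complexes; by \prpref{ZCBH} the functor $\H{\blank}$ commutes with direct limits, and by \ref{colim} direct limits in $\C[\ZZ]$ are exact, so the direct limit remains acyclic. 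This completes the argument.

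There is no genuine obstacle: the only ingredient not stated verbatim in the excerpt is that $\tp{M}{\blank}$ preserves direct limits, which is an immediate transcription of the proof of \prpref{Hom-colim}. This is presumably why the authors remark that the corollary can be proved similarly and leave it to the reader (while also pointing to the alternative proof in \cite{dga}).
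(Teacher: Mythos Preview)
Your proposal is correct and is precisely the ``similar'' proof the paper alludes to: the authors do not write out a proof of this corollary, they merely note that it can be proved similarly to the preceding one (and point to an alternative argument in \cite{dga}), and you have faithfully transcribed that parallel argument, replacing $\Hom{N}{\blank}$ by $\tp{M}{\blank}$ throughout. The one ingredient you flag---that $\tp{M}{\blank}$ preserves colimits---is indeed the only step not stated verbatim for this variable, and your justification via the right adjoint $\Hom[\ZZ]{M}{\blank}$ is correct.
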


The next result was proved by Neeman \cite[rmk.~2.15 and
thm.~8.6]{ANm08} in 2008. He notes, ``I do not know an elementary
proof, a proof which avoids homotopy categories''.  We show that it
follows from a theorem of Benson and Goodearl
\cite[thm.~2.5]{DJBKRG00}\footnote{ Benson and Goodearl's proof uses
  only classical results from homological algebra.} from 2000, which
asserts that if $0 \to F \to P \to F \to 0$ is a short exact sequence
of $R$-modules with $F$ flat and $P$ projective, then $F$ is
projective as well. Notice that if $R$ has finite finitistic
projective dimension, then this assertion follows from Jensen's
\prpcite[6]{CUJ70}, and if $R$ has cardinality $\leqslant \aleph_n$ for some
$n\in\NN$, then it follows from a theorem of Gruson and Jensen
\thmcite[7.10]{LGrCUJ81}.

\begin{prp}
  \label{prp:Neemans-result}
  If $P$ is an acyclic complex of projective $R$-modules such that the
  subcomplex $\Bo{P}$ consists of flat $R$-modules, then $P$ is
  contractible.
\end{prp}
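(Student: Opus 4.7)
The plan is to reduce this to the Benson--Goodearl theorem, which requires a short exact sequence of the very specific form $0 \to F \to Q \to F \to 0$ with $F$ flat and $Q$ projective. The endgame is straightforward: once each $\Bo[v]{P}$ is known to be projective, every short exact sequence $0 \to \Bo[v]{P} \to P_v \to \Bo[v-1]{P} \to 0$ (exact by acyclicity) splits, so $P_v \is \Bo[v]{P} \oplus \Bo[v-1]{P}$ compatibly with the differentials, giving $P \is \Cone{\Id{B}}$ for $B = \Bo{P}$ viewed as a graded module with zero differential; condition \eqclbl{iii} in \pgref{contractible} then forces $P$ to be contractible.

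To show that each $\Bo[v]{P}$ is projective, I would assemble all the boundaries into a single module $F = \dfinsum_{n\in\ZZ} \Bo[n]{P}$ and all of $P$'s components into $Q = \dfinsum_{n\in\ZZ} P_n$; the former is flat as a sum of flats and the latter is projective as a sum of projectives. Forgetting the grading, the differential $\dif{P}$ becomes an $R$-linear endomorphism of $Q$. By acyclicity, $\Ker{\dif{P}} = \dfinsum_n \Cy[n]{P} = \dfinsum_n \Bo[n]{P} = F$; and the image of $\dif{P}$ in $Q$ is $\dfinsum_n \Bo[n-1]{P} \is F$ after reindexing. The canonical short exact sequence furnished by the first isomorphism theorem is therefore, after the obvious identifications,
\begin{equation*}
  0 \lra F \lra Q \lra F \lra 0\,,
\end{equation*}
with $F$ flat and $Q$ projective. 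Benson and Goodearl's theorem then delivers that $F$ is projective, and each $\Bo[v]{P}$, being a direct summand of $F$, is projective as well.

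The only real obstacle is spotting this construction. The key realization is that collapsing the grading of an acyclic complex of projectives with flat boundaries converts the acyclicity hypothesis into a single module endomorphism of a projective module whose kernel and image are both copies of a flat module---exactly the input that Benson--Goodearl turns into projectivity. Once the displayed short exact sequence is on the table, the remainder of the argument is entirely formal.
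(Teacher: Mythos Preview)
Your proof is correct and follows essentially the same approach as the paper: both assemble the single short exact sequence $0 \to \coprod_{\d}\Bo[\d]{P} \to \coprod_{\d}P_\d \to \coprod_{\d}\Bo[\d]{P} \to 0$ and invoke Benson--Goodearl's theorem to conclude that each $\Bo[\d]{P}$ is projective, whence $P$ is contractible. The only cosmetic difference is that the paper obtains this sequence as the coproduct of the degreewise sequences $0 \to \Bo[\d]{P} \to P_\d \to \Bo[\d-1]{P} \to 0$, whereas you obtain it by regarding $\dif{P}$ as an endomorphism of $\coprod_\d P_\d$ and applying the first isomorphism theorem; the resulting sequence is the same.
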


\begin{proof}
  For each $\dinZ$ the sequence $0 \to \Bo[\d]{P} \to P_\d \to
  \Bo[\d-1]{P} \to 0$ is exact. The coproduct of all these exact
  sequences yields the exact sequence
  \begin{equation*}
    0 \lra \dcoprod_{\dinZ}\Bo[\d]{P} \lra
    \dcoprod_{\dinZ}P_\d \lra
    \dcoprod_{\dinZ} \Bo[\d]{P} \lra 0\;.
  \end{equation*}
  By assumption, the module $\coprod_{\dinZ}\Bo[\d]{P}$ is flat and
  $\coprod_{\dinZ}P_\d$ is projective, so it follows from
  \cite[thm.~2.5]{DJBKRG00} that $\coprod_{\dinZ}\Bo[\d]{P}$ is
  projective. Consequently, every module $\Bo[\d]{P}$ is projective,
  and therefore $P$ is contractible.
\end{proof}

Semi-projective complexes, just like semi-flat complexes, have a
two-out-of-three property; see the proof of \prpcite[3.7]{dga}.

\begin{ipg}
  \label{2-3-semiproj}
  Let $0 \to P' \to P \to P'' \to 0$ be an exact sequence of
  $R$-complexes.  If $P''$ is semi-projective, then $P'$ is
  semi-projective if and only if $P$ is semi-projective.
\end{ipg}

\begin{thm}
  \label{thm:sfp}
  A semi-flat complex of projective $R$-modules is semi-projective.
\end{thm}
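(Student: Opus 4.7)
The plan is to exhibit $F$ as a chain homotopy retract of a semi-free complex. By \ref{semifree-res} pick a surjective semi-free resolution $\mapdef{\pi}{L}{F}$ and set $K=\Ker{\pi}$, giving a short exact sequence $0 \lra K \lra L \xra{\pi} F \lra 0$. Since every module $F_\d$ is projective by hypothesis, this sequence is degreewise split, so each $K_\d$ is a direct summand of the free module $L_\d$ and is therefore projective. Moreover $K$ is acyclic because $\pi$ is a quasi-isomorphism, and $L$, being semi-free, is semi-projective and hence semi-flat by \exaref{semi-flat}; applying the two-out-of-three property \ref{2-3-semiflat} to the displayed sequence yields that $K$ is semi-flat as well.

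The key step is to show that $K$ is contractible. By \thmref{contractible}, applied to the acyclic semi-flat complex $K$, the subcomplex $\Bo{K}$ consists of flat $R$-modules. Since $K$ itself is a complex of projective modules, \prpref{Neemans-result} then forces $K$ to be contractible.

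To conclude, invoke the standard fact that a degreewise split short exact sequence of complexes whose kernel is contractible has its quotient map as a chain homotopy equivalence; hence $\pi$ is a chain homotopy equivalence. For every acyclic $R$-complex $M$ the complex $\Hom{F}{M}$ is then chain homotopy equivalent to $\Hom{L}{M}$, which is acyclic by semi-projectivity of $L$; thus $\Hom{F}{\blank}$ preserves acyclicity. Since $F$ is by hypothesis a complex of projective modules, condition \eqclbl{iii} of \ref{semi-projective} gives that $F$ is semi-projective. The substantive content is concentrated in \prpref{Neemans-result} (i.e.\ in the Benson--Goodearl theorem from \cite{DJBKRG00}); every other step merely glues together results already established in the paper.
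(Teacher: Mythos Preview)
Your proof is correct and follows the same overall strategy as the paper: take a semi-free resolution, use \thmref{contractible} together with \prpref{Neemans-result} to show that an auxiliary acyclic semi-flat complex of projectives is contractible, and deduce semi-projectivity of $F$. The difference is only in the choice of auxiliary complex. The paper works with the mapping cone sequence $0 \to F \to \Cone{\pi} \to \Shift{L} \to 0$, shows $\Cone{\pi}$ is contractible (hence semi-projective), and finishes via the two-out-of-three property~\ref{2-3-semiproj}. You instead work with the kernel sequence $0 \to K \to L \to F \to 0$, show $K$ is contractible, and finish by arguing that $\pi$ is a chain homotopy equivalence. Your route uses the hypothesis that each $F_\d$ is projective twice---once to get $K_\d$ projective and once to get the degreewise splitting needed for the homotopy-equivalence step---whereas the cone sequence is automatically degreewise split; on the other hand, you avoid invoking~\ref{2-3-semiproj}. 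In fact your argument gives slightly more: since the sequence is degreewise split with contractible kernel, it actually splits in $\C$, so $F$ is a direct summand of $L$.
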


\begin{proof}
  Let $\qisdef{\pi}{L}{F}$ be a semi-free resolution, see
  \ref{semifree-res}, and consider the mapping cone sequence $0 \to F
  \to \Cone{\pi} \to \Shift{L} \to 0$; see~\ref{cone}. Since
  $\Shift{L}$ is semi-projective, see \exaref{semi-projective}, it
  suffices by \ref{2-3-semiproj} to argue that $\Cone{\pi}$ is
  semi-projective. As the complexes $F$ and $\Shift{L}$ are semi-flat
  and consist of projective modules, it follows from
  \ref{2-3-semiflat} and \ref{cone} that $\Cone{\pi}$ is an acyclic
  semi-flat complex of projective modules.  Thus \thmref{contractible}
  and \prpref{Neemans-result} apply to show that $\Cone{\pi}$ is
  contractible. It remains to note that every contractible complex of
  projective modules is semi-projective; this is immediate from
  \ref{semi-projective}.
\end{proof}

\section*{Acknowledgments}

\noindent
We thank Jan {\v{S}}{\v{t}}ov{\'{\i}}{\v{c}}ek for pointing us to
Ad{\'a}mek and Rosick{\'y}'s book. We also thank the anonymous referee
for helpful comments on both content and exposition.

\bibliographystyle{amsplain} 

\def\cprime{$'$}
  \providecommand{\arxiv}[2][AC]{\mbox{\href{http://arxiv.org/abs/#2}{\sf
  arXiv:#2 [math.#1]}}}
  \providecommand{\oldarxiv}[2][AC]{\mbox{\href{http://arxiv.org/abs/math/#2}{\sf
  arXiv:math/#2
  [math.#1]}}}\providecommand{\MR}[1]{\mbox{\href{http://www.ams.org/mathscinet-getitem?mr=#1}{#1}}}
  \renewcommand{\MR}[1]{\mbox{\href{http://www.ams.org/mathscinet-getitem?mr=#1}{#1}}}
\providecommand{\bysame}{\leavevmode\hbox to3em{\hrulefill}\thinspace}
\providecommand{\MR}{\relax\ifhmode\unskip\space\fi MR }
\providecommand{\MRhref}[2]{%
  \href{http://www.ams.org/mathscinet-getitem?mr=#1}{#2}
}
\providecommand{\href}[2]{#2}

\end{document}